
\documentclass[preprint,12pt]{elsarticle}

\usepackage{amssymb}
\usepackage{amsmath}
\usepackage{amsthm}
\usepackage{tikz}
\usepackage{latexsym}
\usepackage{url}

\theoremstyle{plain}
\newtheorem{thm}{Theorem}
\newtheorem{lemma}[thm]{Lemma}
\newtheorem{prop}[thm]{Proposition}
\newtheorem{cor}[thm]{Corollary}

 \theoremstyle{definition} 
\newdefinition{question}[thm]{Question}
\newdefinition{conjecture}[thm]{Conjecture}
\newdefinition{defn}[thm]{Definition}

\newdefinition{rmk}[thm]{Remark}

\journal{Journal of Group Theory}

\begin{document}

\begin{frontmatter}
 \title{Generating Sequences of $\mathrm{PSL}(2,p)$}
 
 \author[rvt]{Benjamin Nachman}
 \ead{bpn7@cornell.edu}
 
 \address[rvt]{Cornell University, 310 Malott Hall, Ithaca, NY 14853 USA}
 
 \begin{abstract}
Julius Whiston and Jan Saxl~\cite{W&S} showed that the size of an irredundant generating set of the group $G=\mathrm{PSL}(2,p)$ is at most four and computed the size $m(G)$ of a maximal set for many primes.  We will extend this result to a larger class of primes, with a surprising result that when $p\not\equiv\pm 1\mod 10$, $m(G)=3$ except for the special case $p=7$.   In addition, we will determine which orders of elements in irredundant generating sets of $\mathrm{PSL}(2,p)$ with lengths less than or equal to four are possible in most cases.  We also give some remarks about the behavior of $\mathrm{PSL}(2,p)$ with respect to the replacement property for groups.  
\end{abstract}
 
\begin{keyword}
  generating sequences \sep general position \sep projective linear group
\end{keyword}
 
 \end{frontmatter}

\section{Introduction}

The two dimensional projective linear group over a finite field with $p$ elements, $\mathrm{PSL}(2,p)$ has been extensively studied since Galois, who constructed them and showed their simplicity for $p>3$~\cite{Wilson}.   One of their nice properties is due to a theorem by E. Dickson, which shows that there are only a small number of possibilities for the isomorphism types of maximal subgroups.  There are two possibilities: ones that exist for all $p$ and ones that exist for exceptional primes.  A more recent proof of Dickson's Theorem can be found in~\cite{MS} and a complete proof due to Dickson is in~\cite{LD}.  

\begin{thm}[Dickson] The maximal subgroups of $\mathrm{PSL}(2,p)$ are isomorphic to one of the following groups:

\begin{enumerate}
\item $G_p$
\item $\mathrm{D}_{p-1}$, the Dihedral Group of order $p-1$
\item $\mathrm{D}_{p+1}$
\item $\mathrm{A}_4,$ $\mathrm{S_4}$ or $\mathrm{A_5}$,
\end{enumerate}

where $G_p$ is the Frobenius group of order $p(p-1)/2$ that has a natural description as
the semi-direct product $\mathbb{Z}_p \rtimes (\mathbb{Z}_p^*)^2$.  Moreover, while subgroups of types $(1),(2)$ and $(3)$ always exist for $p\neq 2$ (then only (2) and (3) exist), a maximal subgroup isomorphic to $\mathrm{S_4}$ exists if and only if $p\equiv \pm 1\mod 8$, subgroups isomorphic to $\mathrm{A_5}$  exist if and only if $p\equiv\pm 1\mod 10$ and subgroups isomorphic to $\mathrm{A}_4$ are maximal if and only if $p\equiv 3,13,27,37\mod 40$.

\end{thm}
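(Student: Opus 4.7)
The plan is to first classify \emph{all} subgroups of $G=\mathrm{PSL}(2,p)$ by exploiting the natural action on the projective line $\mathbb{P}^1(\mathbb{F}_p)$, and then to single out the ones that are maximal. Given a proper subgroup $H\leq G$, I would split into two cases according to whether $p$ divides $|H|$.

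If $p\mid|H|$, then $H$ contains a Sylow $p$-subgroup $P$, which fixes a unique point $x\in\mathbb{P}^1(\mathbb{F}_p)$. If $H$ stabilizes $x$, then $H$ lies inside the point stabilizer $G_p=\mathbb{Z}_p\rtimes(\mathbb{Z}_p^*)^2$. Otherwise $H$ contains two distinct Sylow $p$-subgroups $P$ and $P^g$, and a standard argument (using that $\mathrm{PSL}(2,p)$ is generated by any two distinct Sylow $p$-subgroups for $p\geq 5$, together with the $2$-transitive action on $\mathbb{P}^1(\mathbb{F}_p)$) forces $H=G$, contradicting properness. So this case produces only subgroups of the Borel $G_p$, whose maximality is immediate since it has index $p+1$ in $G$ and any larger overgroup must pick up a second Sylow $p$-subgroup.

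If $p\nmid|H|$, every element of $H$ is semisimple. Cyclic subgroups then split into two families depending on whether a generator is diagonalizable over $\mathbb{F}_p$ or only over $\mathbb{F}_{p^2}$; the corresponding split and non-split tori have orders $(p-1)/2$ and $(p+1)/2$, and their full normalizers in $G$ give the dihedral groups $\mathrm{D}_{p-1}$ and $\mathrm{D}_{p+1}$. For non-cyclic $H$ coprime to $p$, I would invoke the Klein-type classification of finite subgroups of $\mathrm{PGL}(2,\overline{\mathbb{F}_p})$ of order coprime to $p$: the only possibilities beyond cyclic and dihedral are $\mathrm{A}_4$, $\mathrm{S}_4$, and $\mathrm{A}_5$. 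Maximality of $\mathrm{D}_{p-1}$ and $\mathrm{D}_{p+1}$ then follows by ruling out containment in an exceptional subgroup (trivial for $p$ large enough, checked directly for small $p$).

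The main obstacle is to pin down the precise congruences on $p$ governing the exceptional subgroups. An embedding $\mathrm{S}_4\hookrightarrow G$ requires an element of order $4$ whose trace squared equals $2$, forcing $2$ to be a square in $\mathbb{F}_p$, i.e.\ $p\equiv\pm 1\pmod 8$; an embedding $\mathrm{A}_5\hookrightarrow G$ requires an element of order $5$ whose trace is a root of $x^2-x-1$, forcing $5$ to be a square in $\mathbb{F}_p$, i.e.\ $p\equiv\pm 1\pmod{10}$ (using that $p$ is odd). The group $\mathrm{A}_4$ embeds in $G$ for every $p\geq 5$, but is \emph{maximal} precisely when it is not already contained in a larger exceptional subgroup, i.e.\ when $p\not\equiv\pm 1\pmod 8$ and $p\not\equiv\pm 1\pmod{10}$ simultaneously. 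A Chinese-remainder enumeration of the eligible residues modulo $40$ then yields exactly $p\equiv 3,13,27,37\pmod{40}$, completing the list.
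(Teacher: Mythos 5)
First, a point of comparison: the paper does not actually prove this theorem. It is stated as a classical result with the proof deferred to the cited references (Suzuki's \emph{Group Theory} and Dickson's original monograph). So your proposal is being measured against the standard literature proof rather than against anything in the text. Your outline does follow the standard modern route: split on whether $p$ divides $|H|$, handle the $p\mid|H|$ case via the action on $\mathbb{P}^1(\mathbb{F}_p)$ and the Borel subgroup, and reduce the $p\nmid|H|$ case to the Klein-type classification of finite subgroups of $\mathrm{PGL}_2$ of order prime to the characteristic. That skeleton is sound, and the order-counting argument forcing $H=G$ when $H$ contains two distinct Sylow $p$-subgroups is correct.

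The genuine gap is that your treatment of the congruence conditions only establishes \emph{necessity}, while the theorem asserts ``if and only if.'' You argue that an $\mathrm{S}_4$ forces an element of order $4$, hence a trace $\pm\sqrt{2}$ in $\mathbb{F}_p$, hence $p\equiv\pm 1\bmod 8$; and similarly for $\mathrm{A}_5$ via order-$5$ elements and $\sqrt{5}$. But nothing in the proposal produces an actual copy of $\mathrm{S}_4$ or $\mathrm{A}_5$ inside $\mathrm{PSL}(2,p)$ when the congruence holds --- the existence of an element of order $4$ (resp.\ $5$) does not by itself yield a subgroup isomorphic to $\mathrm{S}_4$ (resp.\ $\mathrm{A}_5$). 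The standard fix is to realize these groups from presentations such as $\langle a,b\mid a^2=b^3=(ab)^4=1\rangle$ and exhibit matrices with the required traces (essentially the ``trace technology'' the paper itself uses later, following McCullough), or to count involutions/subgroups directly. Without that, the $A_4$ maximality criterion also floats: your derivation of the residues $3,13,27,37\bmod 40$ presumes both that $\mathrm{S}_4$ and $\mathrm{A}_5$ \emph{do} exist under the complementary congruences and that every $\mathrm{A}_4$ is then contained in one of them, neither of which is argued. A secondary weakness is the maximality analysis of the dihedral subgroups: ``trivial for $p$ large enough'' hides real exceptions (e.g.\ $\mathrm{D}_{p-1}$ fails to be maximal for $p=5,7,11$, and $\mathrm{D}_{p+1}$ for $p=7$), and one must also rule out containment of one dihedral group in another or in the Borel, not only in the exceptional subgroups.
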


The exceptional maximal subgroups are thus $\mathrm{A}_4$, $\mathrm{S_4}$ and $\mathrm{A_5}$.  Whiston and Saxl~\cite{W&S} have shown that these exceptional groups play a crucial role in describing the size of generating sets.  A generating set $\{g_i\}$ or sequence $\{g_i\}_{i\in I}$ for the group $G$ is called {\it irredundant}\footnote{In other places in the literature, this same property is called {\it independent}.} if after removing any $g_j$ from the set or sequence, the new collection no longer generates $G$.   We will denote\footnote{This function has also been denoted $\mu(G)$.} by $m(G)$ the maximum length of an irredundant generating set of $G$.   In response to Whiston's description of $m(G)$ for $\mathrm{S}_n$~\cite{JW}, Cameron and Cara described the irredundant generating sequences of maximal length~\cite{CC}.  In the same spirit, we will describe which elements can appear in generating sequences of any length up to the maximal length in $\mathrm{PSL}(2,p)$ in most cases.   This size has been determined by Whiston and Saxl~\cite{W&S} for all primes such that the exceptional groups $\mathrm{S_4}$ and $\mathrm{A_5}$ are not maximal in $\mathrm{PSL}(2,p)$.

\begin{thm}[Whiston and Saxl] Let $G=\mathrm{PSL}(2,p)$, $p$ prime.  Then, $m(G)=3$ or $4$.  If $p\neq \pm 1\mod 10$ and $p\neq \pm 1\mod 8$, then $m(G)=3$.  
\end{thm}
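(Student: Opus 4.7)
The basic tool is the following chain principle: for any irredundant generating sequence $(g_1,\dots,g_n)$ of $G$, the nested subgroups $H_i:=\langle g_1,\dots,g_i\rangle$ strictly ascend, $H_{n-1}$ is proper in $G$, and $(g_1,\dots,g_{n-1})$ is irredundant in $H_{n-1}$; in particular $m(H_{n-1})\ge n-1$. By Dickson's theorem, $H_{n-1}$ is contained in a maximal subgroup of one of the enumerated isomorphism types, so bounding $m(G)$ reduces to analyzing $m$ on those maximal subgroups, together with the further constraint that every sub-sequence $(g_j)_{j\neq i}$ must itself generate a proper subgroup and hence lie in a maximal subgroup $M_i$. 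I would first establish the lower bound $m(G)\ge 3$ by exhibiting three explicit irredundant generators: a $p$-element, an involution inverting it, and a further element in a complementary torus, chosen so that each pair generates a proper subgroup of a distinct Dickson type.

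For the upper bound $m(G)\le 4$, one computes directly $m(\mathrm{A}_4)=2$ and $m(\mathrm{S}_4)=m(\mathrm{A}_5)=3$ from their subgroup lattices, $m(\mathrm{D}_{2k})=\omega(k)+1$ where $\omega$ counts distinct prime divisors, and $m(G_p)\le \omega((p-1)/2)+1$ from the Frobenius structure $G_p=\mathbb{Z}_p\rtimes(\mathbb{Z}_p^*)^2$. These estimates alone allow $m(H_{n-1})$ to grow with $p$ in the dihedral and Frobenius cases, so to obtain a uniform bound $n\le 4$ one must use the additional irredundancy constraint: the $n$ maximal subgroups $M_i$ intersect in a highly restricted pattern in $G=\mathrm{PSL}(2,p)$, and exploiting this pattern should force $n\le 4$.

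For the refinement $m(G)\le 3$ when $p\not\equiv\pm 1\pmod{8}$ and $p\not\equiv\pm 1\pmod{10}$, the only exceptional maximal subgroup that can occur is $\mathrm{A}_4$, with $m(\mathrm{A}_4)=2$. Assuming for contradiction that an irredundant quadruple $(g_1,g_2,g_3,g_4)$ exists, $H_3$ cannot be contained in $\mathrm{A}_4$ and must therefore lie in some $G_p$ or $\mathrm{D}_{p\pm 1}$, where $m(H_3)=3$ is genuinely possible. The main obstacle is then to show that no $g_4\in G\setminus H_3$ preserves irredundancy: appending such a $g_4$ requires three further maximal subgroups $M_1,M_2,M_3$, each containing $g_4$ together with two of $g_1,g_2,g_3$, and I would argue that the required intersection pattern cannot be realized using only $G_p$, $\mathrm{D}_{p\pm 1}$, and $\mathrm{A}_4$. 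This intersection analysis --- leveraging the arithmetic of the orders $p(p-1)/2$, $p\pm 1$, and $12$, together with the absence of $\mathrm{S}_4$ and $\mathrm{A}_5$ --- is where I expect the argument to be most delicate, since it is precisely the availability of the exceptional subgroups that seems to allow $m(G)=4$ in the remaining cases.
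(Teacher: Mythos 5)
This theorem is quoted from Whiston and Saxl and the paper does not reprove it, so the comparison is against their argument together with the lower-bound sketch in Section 2. Your plan gets the architecture right --- Dickson's theorem, the correspondence between an irredundant sequence and maximal subgroups $M_1,\dots,M_n$ in general position, and the observation that the exceptional subgroups are what permit $m(G)=4$ --- but the decisive step is missing at both places where you defer to ``the intersection pattern.'' Since $m(\mathrm{D}_{p\pm 1})$ and $m(G_p)$ grow with the number of prime divisors of $p\mp 1$ and $(p-1)/2$, your subgroup-lattice computations alone give no uniform bound; the entire content of the theorem is the lemma (Proposition~\ref{prop:twoS}, taken from Whiston--Saxl) that no more than three of the $H_i$ can be of the form $\mathrm{D}_{p\pm 1}$ or $G_p$, and that if three are then $m(G)=3$. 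Both your upper bound $n\le 4$ (if $n\ge 5$, each $M_i$ contains $n-1\ge 4$ irredundant elements, which already excludes $\mathrm{S}_4$, $\mathrm{A}_5$ and $\mathrm{A}_4$ since the largest irredundant subset of any of these has size at most $3$, so every $M_i$ is of the restricted type and the lemma gives a contradiction) and your refinement to $n\le 3$ when $\mathrm{S}_4$ and $\mathrm{A}_5$ are absent reduce to exactly this lemma, which you assert (``I would argue that the required intersection pattern cannot be realized'') but do not prove. Saying that you expect this step to be the delicate one is an accurate diagnosis, not an argument.

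A second, smaller gap: your lower-bound triple does not always exist. For $p\equiv 3\pmod 4$ the two classes of elements of order $p$ in $\mathrm{PSL}(2,p)$ are not real (e.g.\ the classes $7A$ and $7B$ in $\mathrm{PSL}(2,7)$), so no involution inverts a $p$-element; equivalently, $N_G(\mathbb{Z}_p)\cong\mathbb{Z}_p\rtimes\mathbb{Z}_{(p-1)/2}$ acts on $\mathbb{Z}_p$ by squares and contains the inversion only when $-1$ is a square mod $p$. The paper's route in Section 2 avoids this entirely: $G$ is simple of even order, hence generated by its involutions, and any two involutions generate a proper dihedral subgroup, so some irredundant generating set consisting of involutions has length at least $3$. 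You should either adopt that argument or restrict your explicit construction to $p\equiv 1\pmod 4$ and supply a separate one otherwise.
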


In their paper~\cite{W&S}, Whiston and Saxl note that $m(\mathrm{PSL(2,7)})=m(\mathrm{PSL(2,11)})=4$.  Since then, various computations have been made to show that the maximal length is also four when $p=19$ and $p=31$.  The conjecture in~\cite{NachmanThesis} is that this small list of primes constitutes the entire collection.  The strategy presented here does not easily extend to the case of $p\equiv\pm 1\mod 10$, but a large part of this suprising conjecture is proved in this paper, summarized in the following theorem.

\begin{thm}
 \label{thm:main}
 Let $G=\mathrm{PSL}(2,p)$, $p$ prime.  If $p\not\equiv\pm 1\mod 10$,  then, $m(G)=3$ unless $p=7$, in which case $m(G)=4$.
\end{thm}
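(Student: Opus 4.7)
The Whiston-Saxl theorem disposes of all primes with $p \not\equiv \pm 1 \pmod{8}$ and $p \not\equiv \pm 1 \pmod{10}$, so the remaining case is $p \equiv \pm 1 \pmod 8$ but $p \not\equiv \pm 1 \pmod{10}$. By Dickson's theorem, for such $p$ the maximal subgroups of $G = \mathrm{PSL}(2, p)$ are limited to $G_p$, $D_{p-1}$, $D_{p+1}$, $S_4$, and (for certain residues mod $40$) $A_4$; the key new feature relative to the Whiston-Saxl setting is the maximal $S_4$.

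Suppose, for contradiction, that $\{g_1, g_2, g_3, g_4\}$ is an irredundant generating sequence of $G$, and set $H_i = \langle g_j : j \neq i\rangle$. By irredundancy $H_i$ is a proper subgroup of $G$, the restricted sequence $\{g_j : j \neq i\}$ is irredundant in $H_i$, and so $m(H_i) \geq 3$; pick a maximal subgroup $M_i \supseteq H_i$. A direct check gives $m(A_4) = 2$, which excludes $M_i \cong A_4$. Inspection of the subgroup lattice of $S_4$ shows that every proper subgroup has $m \leq 2$ (in particular $m(D_4)=2$ since $D_4$ is a $2$-group), so whenever $M_i \cong S_4$ one in fact has $H_i = M_i$. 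The dihedral and Frobenius alternatives are controlled by the Whiston-Saxl analysis, so the genuinely new work appears when at least one $M_i$ is $S_4$.

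The crux of the argument is the case analysis of configurations $(M_1, M_2, M_3, M_4)$ in which at least one $M_i$ is $S_4$. Whenever $H_i = S_4$, three of the $g_j$'s have order in $\{1, 2, 3, 4\}$, and in particular none of them has order $p$. The pairwise intersections $H_i \cap H_j \supseteq \langle g_k, g_\ell\rangle$ are $2$-generated subgroups lying in both $M_i$ and $M_j$. I would combine this with the geometry of the action of $G$ on $\mathbb{P}^1(\mathbb{F}_p)$: an element of order $p$ fixes exactly one point and lies in a unique conjugate of $G_p$; $S_4$ has no fixed points on $\mathbb{P}^1(\mathbb{F}_p)$; and two distinct conjugates of $S_4$ in $G$ are self-normalizing with small intersection. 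Tracking these constraints across all four indices should rule out every configuration for $p \geq 11$ in the relevant residue classes. The exceptional case $p = 7$ is then handled by exhibiting the explicit irredundant $4$-generating set of $\mathrm{PSL}(2, 7) \cong \mathrm{GL}(3, 2)$ recorded by Whiston-Saxl.

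The main obstacle is the combinatorial bookkeeping for mixed configurations, in which some $M_i$ are $S_4$ and others are dihedral or Frobenius. Here one must quantify the intersections $S_4 \cap D_{p \pm 1}$ and $S_4 \cap G_p$ inside $G$ precisely enough to show that the resulting constraints either collapse the purported irredundant $4$-sequence to a redundant one or force a numerical coincidence that only occurs at $p = 7$.
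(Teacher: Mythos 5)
Your reduction to the case $p\equiv\pm 1\bmod 8$, $p\not\equiv\pm 1\bmod 10$ is correct and matches the paper, as does the observation that a maximal $M_i\cong \mathrm{S}_4$ forces $H_i=M_i$ (every proper subgroup of $\mathrm{S}_4$ has $m\leq 2$, so an irredundant triple in $\mathrm{S}_4$ must generate it). But the heart of the theorem --- actually ruling out every configuration $(M_1,\dots,M_4)$ for $p\geq 17$ --- is not carried out: ``tracking these constraints across all four indices should rule out every configuration'' and ``force a numerical coincidence that only occurs at $p=7$'' is a restatement of what must be proved, not an argument. It is far from clear that the local intersection data you propose ($S_4$ fixed-point-freeness on $\mathbb{P}^1(\mathbb{F}_p)$, self-normalization, bounds on $\mathrm{S}_4\cap \mathrm{D}_{p\pm1}$ and $\mathrm{S}_4\cap G_p$) suffices, because the obstruction is not purely local. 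The paper needs two further ingredients you do not supply. First, a sharpening of the order constraint: not merely that the $g_j$ have order in $\{1,2,3,4\}$, but (via a GAP enumeration of the irredundant generating triples of $\mathrm{S}_4$, which all consist of elements of order $2$ or $3$, followed by a normalizer argument inside $\mathrm{S}_4$) that \emph{every} $g_i$ has order exactly $2$ when $p\equiv\pm1\bmod 8$ and $p\not\equiv\pm1\bmod{10}$. Second, and decisively, a global step: the four involutions together with all relations imposed by the $\mathrm{S}_4$-triples and the intersection lemmas present a finite or solvable group. The paper establishes this by running Todd--Coxeter on the resulting presentations, obtaining either $|Q|\leq 1344<|\mathrm{PSL}(2,17)|=2448$, or a presentation that surjects from the affine Coxeter group $\tilde{A}_3\cong\mathbb{Z}^3\rtimes\mathrm{S}_4$ (or a second explicitly identified Coxeter group), both solvable and hence without the simple group $G$ as a quotient. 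Nothing in your sketch substitutes for this step, and without it the argument does not close.

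A secondary issue: your mixed case (``some $M_i$ are $\mathrm{S}_4$ and others dihedral or Frobenius'') is flagged as the main obstacle but left entirely open, whereas the paper proves specific lemmas there --- e.g.\ $G_p$ cannot occur as an $M_3$ or $M_4$ at all (its intersection with an $\mathrm{S}_4$ is cyclic of order at most $4$, killing general position), no two of the pairwise products $g_ig_j$ can have order $3$, no two can have order $4$, and when $M_3,M_4$ are both dihedral one gets $M_3\cap M_4\cong\mathbb{Z}_2^2$. These are exactly the relations that make the Todd--Coxeter computations terminate or reduce to solvable Coxeter quotients; identifying them is a substantial part of the proof.
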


To begin, we introduce the idea of the replacement property for groups and show how it is useful for constructing irredundant generating sequences.  

\section{Irredundant Generating Sets and the Replacement Property}

Linear algebra often forms a concrete base upon which intuition is built for studying more general objects.  Understanding generating sequences of groups is no exception -- the idea of an irredundant generating set is an analogy to a basis of a vector space.   In the case of vector spaces, the classification of bases is easy -- they all have the same length.  This is not the case for groups.  If we denote by $r(G)$ the minimum length of an irredundant generating sequence, then clearly $m(G)\geq r(G)$ and in general this inequality is strict.  For example, one can easily show based on elementary linear algebra that $G=\mathrm{PSL}(2,p)$ can be generated by two elements so $r(G)=2$ since $G$ is not cyclic.  On the other hand for $p>2$, $|G|$ must be even since its order is $p(p-1)(p+1)/2$.  Therefore, there exist nontrivial elements of order $2$.  Let $H$ be the subgroup generated by all such elements.  Then, $1<H\unlhd G$ and so $H=G$ because $\mathrm{PSL}(2,p)$ is simple.  Furthermore, two elements of order $2$ generate a dihedral group and so there must exist an irredundant generating sequence of length at least $3$ (with elements all of order $2$) and so $r(G)<m(G)$ for $p>2$.  

In addition, for a vector space $V$, every linearly independent subset has length at most $\mathrm{dim}(V)$.   For a group, it is not the case that for $H< G$, $m(H)< m(G)$.  For example, for $G=\mathrm{PSL(2,17)},$ $m(G)=3$ but $G$ has maximal subgroups isomorphic to $\mathrm{S}_4$, for which $m=3$ as well.  A group which does have the property that for all subgroups $H< G$, $m(H)<m(G)$ is called {\it strongly flat}.  Two important examples are $\mathrm{S_4}$~ and ~$\mathrm{A_5}$.  In fact, all the symmetric groups, for which $m(\mathrm{S}_n)=n-1$, are strongly flat~\cite{W&S}.

Another important aspect of vector spaces is the elementary fact that any linearly independent set can replace a segment of a basis.  The idea is to generalize this notion to arbitrary groups.  Instead of looking at bases, the generalization is generating sets.  Also, instead of replacing many elements of the generating set, the focus will be on replacing a single element.  This led D. Collins and R. K. Dennis to make the following definition:

\begin{defn}[Replacement Property] A group $G$ satisfies the {\it replacement property} for the generating sequence $s = (g_1,...,g_k)$ if for any $\mathrm{id}\neq g\in G$, there exists an $i$ so that $s' = (g_1,...,g_{i-1},g,g_{i+1}...,g_k)$ generates $G$.
\end{defn}


A group $G$ is said to satisfy the replacement property if it satisfies the replacement property for all irredundant sequences of length $m(G)$.  Vector spaces satisfy the replacement property, but this is not true for all groups.  For example, consider $G=Q_8$, the Quaternion group.  If we think of $G$ as the elements $\{\pm1,\pm i,\pm j\pm k\}$, then it is clear that $i,j$ is a generating sequence of $G$.  However, we cannot replace either of $i$ or $j$ in this sequence with $-1$.  More generally if the Frattini subgroup of a group is nontrivial, then the nontrivial non-generating elements will cause $G$ to fail the replacement property.  For $Q_8$, $\{\pm 1\}$ is the Frattini subgroup and so it fails the replacement property.  One could modify the definition of the replacement property to exclude such cases.  Either way, there are examples of groups which are Frattini free and still fail the replacement property.  For example, when $p\equiv +1\mod 8$, $\mathrm{PSL}(2,p)$~is such a group.  Before showing this, the definition of replacement property must be reworked slightly.  This property has been phrased in terms of generating sequences, but it can be restated in terms of certain sets of maximal subgroups.  

Let $(M_1,...,M_n)$ be a sequence of maximal subgroups of a finite group $G$ and let $(g_1,...,g_n)$ be a sequence of elements of $G$.  These two sequences are said to correspond to each other if $g_i\not\in M_i$ for any $i\in \{1,...,n\}$ but $g_j\in M_i$ whenever $j\neq i$.   With this connection, there is a relationship between maximal subgroups and irredundant generating sequences:

\begin{prop}[D. Collins and R. K. Dennis] 
If $(g_1,...,g_n)$ is an irredundant generating sequence, then it corresponds to a sequence of maximal subgroups $(M_1,...,M_n)$ and $\cap_{i\in J} M_i\subsetneq\cap_{i\in K} M_i $ for all $J,K\subset I=\{1,...,n\}$ and $K\subsetneq J$.  We say that subgroups with this last property are in general position.
\end{prop}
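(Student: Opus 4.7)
The plan is to build the maximal subgroups $M_i$ from the irredundancy hypothesis, verify the correspondence, and then exhibit explicit witnesses (namely, the generators themselves) to get the strict inclusions.

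First I would construct the $M_i$. For each $i \in I$, irredundancy of the sequence means that the subgroup $H_i = \langle g_j : j \neq i \rangle$ is a proper subgroup of $G$. Since $G$ is finite, $H_i$ is contained in some maximal subgroup, which I call $M_i$ (making a choice if there are several). By construction $g_j \in M_i$ for every $j \neq i$. If $g_i$ also lay in $M_i$, then $M_i$ would contain every member of the generating sequence and hence equal $G$, contradicting maximality. Thus $g_i \notin M_i$, which is exactly the condition that $(g_1,\ldots,g_n)$ corresponds to $(M_1,\ldots,M_n)$.

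Next I would verify the general position property. Fix $K \subsetneq J \subseteq I$. The inclusion $\bigcap_{i \in J} M_i \subseteq \bigcap_{i \in K} M_i$ is immediate since $K \subset J$. For the strict part, pick any index $j \in J \setminus K$ and consider the element $g_j$. For every $i \in K$ we have $i \neq j$, so by the correspondence $g_j \in M_i$; hence $g_j \in \bigcap_{i \in K} M_i$. On the other hand $g_j \notin M_j$ and $j \in J$, so $g_j \notin \bigcap_{i \in J} M_i$. Therefore the inclusion is proper, witnessed by $g_j$.

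There is really no obstacle here: the whole proposition is a translation of the definition of irredundancy into the language of maximal subgroups, and the witnesses for general position come for free from the $g_j$'s themselves. The only nontrivial ingredient is the existence of a maximal subgroup containing each $H_i$, which uses finiteness of $G$; for infinite groups one would need to assume or verify that proper subgroups are always contained in maximal ones.
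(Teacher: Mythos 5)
Your proposal is correct and follows essentially the same route as the paper: choose $M_i$ as a maximal subgroup containing $H_i=\langle g_j : j\neq i\rangle$, observe that $g_j\in\bigcap_{i\in J}M_i$ if and only if $j\notin J$, and use the generators themselves as witnesses for the strict inclusions. The only difference is cosmetic — you name an explicit witness $g_j$ with $j\in J\setminus K$, while the paper phrases the same fact as the intersections meeting $\{g_1,\ldots,g_n\}$ in distinct subsets.
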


\begin{proof}
Let $H_i=\langle g_1,...,g_{i-1},g_{i+1},...,g_n\rangle$.  Since $(g_1,...,g_n)$ is an irredundant generating sequence, $H_i$ is a proper subgroup of $G$.  Therefore, there exists a maximal subgroup $H_i\leq M_i$.  Note that $g_i\not\in M_i$, since $M_i$ is also a proper subgroup, but $g_j\in M_i$ for all $j\neq i$ by construction.  Therefore, $(M_1,...,M_n)$ corresponds to $(g_1,...,g_n)$.  Now, one needs to show that the maximal subgroups are in general position.  By construction, for $J\subset I=\{1,...,n\}$ then $g_j\in \cap _{i\in J} M_i$ if and only if $j\not\in J$.  Therefore, the subgroups $\cap_{i\in J} M_i$ are all distinct as no two of them intersect $\{g_1,...,g_n\}$ in the same way.  
\end{proof}

Now that a relationship exists between irredundant generating sequences and maximal subgroups in general position, one can construct a criteria on maximal subgroups for establishing the replacement property.  Using the same ideas as in the previous proposition, one can prove the following:

\begin{prop}[D. Collins and R. K. Dennis] 
Let $s=(g_1,...,g_n)$ be an irredundant generating sequence of the group $G$.  If every sequence of maximal subgroups $(M_1,...,M_n)$ corresponding to $s$ intersects trivially, then $s$ satisfies the replacement property.
\label{important}
\end{prop}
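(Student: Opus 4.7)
The plan is to prove the contrapositive formulation: fix a non-identity element $g\in G$, assume for contradiction that no single-element replacement by $g$ produces a generating set, and then manufacture from this a corresponding sequence of maximal subgroups whose intersection contains $g$, contradicting the hypothesis.

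More precisely, I would proceed as follows. Suppose the replacement property fails for $s$ at the element $g\neq \mathrm{id}$. Then for every index $i\in\{1,\ldots,n\}$ the set $T_i=\{g_1,\ldots,g_{i-1},g,g_{i+1},\ldots,g_n\}$ fails to generate $G$, so $\langle T_i\rangle$ is contained in some maximal subgroup $M_i$ of $G$. The task is now to check that this choice of $M_i$'s is in fact a sequence of maximal subgroups corresponding to $s$ in the precise sense of the definition preceding the proposition.

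For the correspondence, two things must be verified for each $i$: first, that $g_j\in M_i$ for every $j\neq i$, and second, that $g_i\notin M_i$. The first is immediate because $g_j\in T_i\subseteq M_i$ for $j\neq i$. For the second, observe that if $g_i$ were also in $M_i$, then $M_i$ would contain every $g_k$ and hence all of $\langle g_1,\ldots,g_n\rangle=G$, contradicting maximality (hence properness) of $M_i$. So $(M_1,\ldots,M_n)$ genuinely corresponds to $s$, and by the hypothesis of the proposition we have $\bigcap_{i=1}^n M_i=\{\mathrm{id}\}$. But $g\in T_i\subseteq M_i$ for every $i$ by construction, so $g\in \bigcap_i M_i=\{\mathrm{id}\}$, contradicting $g\neq \mathrm{id}$.

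I do not expect any real obstacle here: the argument is a short bookkeeping exercise, and the only subtle point is checking the second half of the correspondence condition (that $g_i\notin M_i$), which uses irredundancy of $s$ only indirectly, via the fact that $s$ generates $G$ together with the properness of each $M_i$. No further structure of $G$ is needed, so the proof is essentially purely formal once the right corresponding sequence has been identified.
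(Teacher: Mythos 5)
Your proposal is correct and follows essentially the same route as the paper: both argue by contraposition, pick a maximal subgroup $M_i$ containing each failed replacement subgroup $\langle g_1,\ldots,g_{i-1},g,g_{i+1},\ldots,g_n\rangle$, note that this sequence corresponds to $s$, and conclude that $g$ lies in $\bigcap_i M_i$. Your explicit verification that $g_i\notin M_i$ (via properness of $M_i$ and the fact that $s$ generates $G$) is a detail the paper leaves implicit, but the argument is identical in substance.
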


\begin{proof}
We prove the contraposition.  If $s$ fails the replacement property for $g$, then for each $i$, the sequence $(g_1,...,g_{i-1},g,g_{i+1},...,g_n)$ generates a proper subgroup $H_i$ of $G$.  Pick a maximal subgroup $H_i\leq M_i$.  Then, $(M_1,...,M_n)$ corresponds to $s$ by definition and furthermore, $g\in\cap M_i$ by construction.
\end{proof}

Now, we will focus on irredundant generating sequences of $\mathrm{PSL}(2,p)$~which will eventually lead us  to study how this group behaves with respect to the replacement property.

\section{Irredundant Sequences of Maximal Length in~$G=\mathrm{PSL}(2,p)$}

The general strategy for proving that $m(G)=3$ for most cases is to take irredundant generating sequences and try to `glue them together' and see what possibilities exist for the resulting group.   We will make this procedure more quantitative as the discussion progresses.    In this process, we will switch back and forth between considering elements and (maximal) subgroups corresponding to the elements.   Let $g_1,g_2,g_3,g_4\in G$ be an irredundant generating set.  Let $H_1,H_2,H_3,H_4$ be the corresponding family of subgroups in general position, (e.g. $H_1=\langle g_2,g_3,g_4\rangle$) and let $M_1,M_2,M_3,M_4$ be a corresponding set of maximal subgroups in general position, i.e. $H_i\leq M_i$, $i=1,...,4$.  Let $p\equiv\pm 1\mod 10$ or $p\equiv \pm 1\mod 8$.  In the course of their proof, Whiston and Saxl~\cite{W&S} show that in the case $m(G)=4$, it must be that there exists an $i$ such that $H_i\cong\mathrm{S}_4$ or $H_i\cong\mathrm{A}_5$.  In fact, one can learn even more in general about the $g_i$ and the $H_i$.   Another proposition in Whiston and Saxl's paper~\cite{W&S} says the following:

\begin{prop}
\label{prop:twoS}
No more than three $H_i$ can be of the form $\mathrm{D}_{p\pm 1}$ or $G_p$.  If three of the $H_i$ are of this form, then $m(G)=3$.  
\end{prop}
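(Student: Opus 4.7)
The plan is to reduce both parts of the proposition to an intersection analysis of the four maximal subgroups $M_i=H_i$, which coincide because $\mathrm{D}_{p\pm 1}$ and $G_p$ are each maximal in $G$ by Dickson's theorem. Throughout, the basic observation is that for each pair $\{i,j\}$ with complement $\{k,\ell\}$, the subgroup $\langle g_k,g_\ell\rangle\subseteq M_i\cap M_j$ must be non-cyclic: otherwise $g_\ell$ would be a power of $g_k$ and hence lie in $H_\ell=\langle g_i,g_j,g_k\rangle$, violating irredundancy.

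For the first claim, I would suppose for contradiction that all four $H_i$ lie in the non-exceptional list and use the natural action of $G$ on $\mathbb{P}^1(\mathbb{F}_p)$ to control the possible pairwise intersections. Every intersection involving a Borel is cyclic: two distinct Borels meet in a split torus of order $(p-1)/2$; a Borel and a $\mathrm{D}_{p-1}$ meet in a split torus or a subgroup of order at most $2$; and a Borel and a $\mathrm{D}_{p+1}$ meet in a subgroup of order at most $2$ (trivially when $p\equiv 3\pmod 4$). Thus no $M_i$ can be a Borel. For two distinct dihedral subgroups $M_i=N_G(T_i)$ and $M_j=N_G(T_j)$ of the listed form, every element of $M_i\cap M_j$ preserves the (at most four) fixed points of $T_i\cup T_j$ in $\mathbb{P}^1(\overline{\mathbb{F}_p})$, and since each torus is either fixed pointwise or swapped there are at most $2\times 2=4$ such permutations; as elements of $\mathrm{PSL}(2,p)$ are determined by their action on any three distinct points, this forces $|M_i\cap M_j|\leq 4$. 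The only non-cyclic subgroup of order at most $4$ is the Klein four-group $V_4$, so each $\langle g_k,g_\ell\rangle= V_4$. Hence every $g_i$ is an involution and every pair $g_i,g_j$ commutes, making $G=\langle g_1,\dots,g_4\rangle$ abelian, which contradicts the non-abelianness of $\mathrm{PSL}(2,p)$ for $p\geq 5$ (the small cases $p=2,3$ are handled by inspection).

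For the second claim, suppose exactly three of the $H_i$, say $H_1,H_2,H_3$, are of the listed form. The same pairwise analysis applied only to $M_1,M_2,M_3$ still rules out any Borel and forces $M_i\cap M_j= V_4$ for each of the three pairs, so each $\langle g_i,g_4\rangle$ with $i\in\{1,2,3\}$ equals a Klein four-group. Therefore $g_4$ is an involution commuting with each of $g_1,g_2,g_3$ and so centralizes $G=\langle g_1,g_2,g_3,g_4\rangle$; since $Z(G)=1$ for $p\geq 5$, this forces $g_4=1$, contradicting irredundancy. So no such length-four sequence exists, and $m(G)=3$ in this case. I expect the technical heart of the argument to be the dihedral-dihedral intersection bound $|M_i\cap M_j|\leq 4$: establishing it uniformly across the type pairs $(\mathrm{D}_{p-1},\mathrm{D}_{p-1})$, $(\mathrm{D}_{p+1},\mathrm{D}_{p+1})$, and $(\mathrm{D}_{p-1},\mathrm{D}_{p+1})$, and across the parities $p\equiv\pm 1\pmod 4$, requires careful bookkeeping of which involutions of $G$ lie in which tori, and this is where most of the case-work will concentrate.
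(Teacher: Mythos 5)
Your overall architecture---bound the pairwise intersections $M_i\cap M_j$ of the non-exceptional maximal subgroups via the action on the projective line, then use $\langle g_k,g_\ell\rangle\leq H_i\cap H_j\leq M_i\cap M_j$ to force commutation relations among the $g_i$---is sound, and your intersection computations are correct: any intersection involving a Borel is cyclic, and two distinct dihedral maximals $N_G(T_i)$, $N_G(T_j)$ intersect in a subgroup embedding into $\mathbb{Z}_2\times\mathbb{Z}_2$. (Note that the paper itself does not prove this proposition; it is imported from Whiston--Saxl, so yours is an independent derivation. One caveat on the setup: ``of the form $\mathrm{D}_{p\pm1}$ or $G_p$'' should be read as ``$H_i$ lies inside a maximal subgroup of one of these types''---elsewhere the paper explicitly allows $H_i$ to be a proper dihedral subgroup of $M_i$---so you should not assert $H_i=M_i$. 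This is harmless, since your containments only use $H_i\leq M_i$.)

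The genuine error is your opening ``basic observation'' that $\langle g_k,g_\ell\rangle$ must be non-cyclic. A cyclic group generated by two elements need not have either generator a power of the other (e.g.\ $\mathbb{Z}_6=\langle c^2,c^3\rangle$), so cyclicity of $\langle g_k,g_\ell\rangle$ does not by itself contradict irredundancy; yet this observation is what you use both to exclude Borels from among the $M_i$ and to upgrade $|M_i\cap M_j|\leq 4$ to $M_i\cap M_j\cong\mathbb{Z}_2\times\mathbb{Z}_2$. Fortunately neither use is necessary: in every case your computation already shows $M_i\cap M_j$ is \emph{abelian} (cyclic, or a subgroup of the Klein four-group), hence $g_k$ and $g_\ell$ commute. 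If all four $M_i$ are non-exceptional, all the $g_i$ pairwise commute, so $G=\langle g_1,\dots,g_4\rangle$ is abelian, a contradiction. If $M_1,M_2,M_3$ are non-exceptional, then $\langle g_i,g_4\rangle\leq M_j\cap M_k$ for $\{i,j,k\}=\{1,2,3\}$ shows $g_4$ commutes with each of $g_1,g_2,g_3$, hence lies in $Z(G)=1$, contradicting $g_4\neq\mathrm{id}$. So delete the non-cyclicity claim, drop the insistence on equality with the Klein four-group, and replace it with ``abelian'' throughout; the rest of your argument then stands.
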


This means that when $m(G)=4$ at least {\it two} of the $H_i$ must be isomorphic to $\mathrm{A}_5$ or $\mathrm{S}_4$.  To proceed, it is important to understand the generating sequences of $\mathrm{S}_4$ and $\mathrm{A}_5$.   First of all, from Whiston's thesis~\cite{WPhD}, $m(\mathrm{S}_n)=n-1$ which is $3$ for $\mathrm{S}_4$ and since $\mathrm{A}_5\cong \mathrm{PSL(2,5)}$, $m(\mathrm{A}_5)=3$.  Next, note the following.

\begin{lemma}
\label{prop:mustgen}
Every irredundant sequence of length $3$ in $\mathrm{S}_4$ or $\mathrm{A}_5$ must generate.  As was remarked earlier, $\mathrm{S_4}$~and $\mathrm{A_5}$~are strongly flat.
\end{lemma}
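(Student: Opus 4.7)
The plan is to reduce both assertions to the single statement that every proper subgroup $H$ of $G\in\{\mathrm{S}_4,\mathrm{A}_5\}$ satisfies $m(H)\leq 2$. Granting this, if $(g_1,g_2,g_3)$ is an irredundant triple in $G$ that fails to generate, then it is an irredundant generating sequence of the proper subgroup $\langle g_1,g_2,g_3\rangle$, forcing $m$ of that subgroup to be at least $3$ --- a contradiction; and the same bound is precisely the strong flatness assertion, since $m(\mathrm{S}_4)=m(\mathrm{A}_5)=3$ by Whiston's formula $m(\mathrm{S}_n)=n-1$ and the isomorphism $\mathrm{A}_5\cong\mathrm{PSL}(2,5)$.

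The remaining task is therefore to bound $m(H)\leq 2$ for each isomorphism type of proper subgroup. For $\mathrm{S}_4$ these are cyclic groups, $V_4$, $\mathrm{S}_3$, $\mathrm{D}_4$, and $\mathrm{A}_4$; for $\mathrm{A}_5$ they are cyclic groups, $V_4$, $\mathrm{S}_3$, $\mathrm{D}_5$, and $\mathrm{A}_4$. Cyclic groups trivially have $m=1$ and $V_4$, being elementary abelian of rank $2$, has $m=2$. For the odd dihedral groups $\mathrm{S}_3=\mathrm{D}_3$ and $\mathrm{D}_5$, I would observe that any two distinct reflections already generate and any two rotations lie in the unique cyclic subgroup of prime order; enumerating the rotation/reflection patterns on a hypothetical triple rules out irredundance. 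For $\mathrm{D}_4$ the Frattini subgroup $\langle r^2\rangle$ is nontrivial and contained in all three maximal subgroups, so every pairwise intersection $M_i\cap M_j$ equals $\langle r^2\rangle$; general position of three maximal subgroups is then impossible by Proposition~\ref{important}, so $m(\mathrm{D}_4)\leq 2$.

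The most substantive case is $\mathrm{A}_4$, whose maximal subgroups are the unique Klein four-subgroup $V$ and the four Sylow $3$-subgroups $P_1,\dots,P_4$, all pairwise intersecting trivially and intersecting $V$ trivially. Given a hypothetical irredundant triple with corresponding maximal subgroups $(M_1,M_2,M_3)$, I would split into cases on how many $M_i$ equal $V$ and on which Sylow subgroups coincide. Using $g_i\in M_j\cap M_k$ whenever $\{i,j,k\}=\{1,2,3\}$, one checks in every configuration that at least one of the intersections $M_i\cap M_j$ is trivial, forcing the corresponding $g_k$ to be the identity and contradicting irredundance. This yields $m(\mathrm{A}_4)\leq 2$, completing the bound on all proper subgroups and hence the lemma. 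The main obstacle is simply keeping the $\mathrm{A}_4$ and $\mathrm{D}_4$ case analyses tidy via the maximal-subgroup correspondence; no deeper input is needed.
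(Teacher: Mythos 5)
Your proof is correct and follows the same route as the paper: both reduce the two claims to the single assertion that every proper subgroup $H$ of $\mathrm{S}_4$ or $\mathrm{A}_5$ satisfies $m(H)\leq 2$, and then inspect the subgroup lattice. The paper merely asserts that bound for each isomorphism type while you actually verify it; the one nit is that your $\mathrm{D}_4$ (order $8$) case should invoke the Collins--Dennis correspondence between irredundant sequences and maximal subgroups in general position rather than Proposition~\ref{important}, which is the replacement-property criterion, though the argument itself is sound.
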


\begin{proof}
This follows from a careful consideration of the lattice of subgroups.  The union of the sets of possible subgroups for these two groups have isomorphism classes $\{\mathrm{A}_4,\mathrm{D}_{10},\mathrm{D}_8,\mathrm{S}_3,\mathbb{Z}_5,\mathbb{Z}_2^2,\mathbb{Z}_4,\mathbb{Z}_2,\{e\}\}$.  All of these groups have $m(H)\leq 2$.
\end{proof}

Since two of the $H_i$ must be isomorphic to $\mathrm{S}_4$ or $\mathrm{A}_5$, without loss of generality, suppose that $H_1$ and $H_2$ satisfy this condition.  From the maximality of $\mathrm{S}_4$ and $\mathrm{A}_5$, we can further deduce that $M_1\cong H_1$ and $M_2\cong H_2$.  The only possibilities for $M_3$ and $M_4$ by Dickson's Theorem are $\mathrm{S}_4,D_{p\pm 1}$ and $G_p$.  In fact, for length four sequences, this last subgroup is not possible.  

\begin{lemma}
\label{funnysubgroup}
Suppose that $p\equiv \pm 1\mod 10$ or $p\equiv\pm 1\mod 8$.   Suppose that $m(G)=4$.  Then, $H_3$ and $H_4$ are not isomorphic to a subgroup of $G_p$.
\end{lemma}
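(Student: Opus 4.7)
The plan is to assume for contradiction that $H_3\cong K$ for some $K\leq G_p$ and to derive a contradiction from the strong restrictions on the orders of the generators $g_1,g_2,g_4$ that come from $H_1, H_2$ being isomorphic to $\mathrm{S}_4$ or $\mathrm{A}_5$. By maximality, $M_1=H_1$ and $M_2=H_2$. Since element orders in $\mathrm{S}_4$ and $\mathrm{A}_5$ all lie in $\{1,2,3,4,5\}$, each of $g_1\in H_2$, $g_2\in H_1$ and $g_4\in H_1\cap H_2$ has order in $\{2,3,4,5\}$, which for $p\geq 7$ is coprime to $p$. Moreover $(g_1,g_2,g_4)$ is an irredundant generating sequence of $H_3$: irredundancy of $(g_1,\dots,g_4)$ in $G$ forces $g_1\notin H_1\supseteq\langle g_2,g_4\rangle$, and similarly for the other two removals.

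The Frobenius group $G_p\cong\mathbb{Z}_p\rtimes C$ has only two kinds of subgroups: the cyclic ones (lying in $\mathbb{Z}_p$ or in a single conjugate of $C$), and those of the form $\mathbb{Z}_p\rtimes K'$ with $K'\leq C$ nontrivial (which are necessarily non-abelian). I would split into these two cases. If $H_3$ is cyclic, then having a length-$3$ irredundant generating set forces $|H_3|$ to have at least three distinct prime divisors, and $|H_3|\mid\mathrm{lcm}(2,3,4,5)=60$ then gives $|H_3|\in\{30,60\}$. In either case, the three generators must each carry a distinct prime from $\{2,3,5\}$, so any two of $g_1,g_2,g_4$ generate a cyclic subgroup of order at least $6$. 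But $\langle g_2,g_4\rangle\leq H_1\cong\mathrm{S}_4$ or $\mathrm{A}_5$ has no cyclic subgroup of order greater than $5$, a contradiction.

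In the remaining case $H_3\cong\mathbb{Z}_p\rtimes K'$ with $K'\neq 1$, the order of $H_3$ is divisible by $p$, so $H_3$ must lie in a maximal subgroup of $G$ of order divisible by $p$. By Dickson's theorem, for $p\geq 7$ this must be a conjugate of $G_p$, so we may take $M_3=G_p$. For $i\in\{1,2\}$ the intersection $M_i\cap M_3$ is a subgroup of both $M_i\cong\mathrm{S}_4$ or $\mathrm{A}_5$ and of $G_p$; its order is coprime to $p$, so it is a cyclic subgroup of $G_p$ sitting in a single conjugate of $C$. The element $g_4$ belongs to both $M_1\cap M_3$ and $M_2\cap M_3$, and in a Frobenius group a non-identity element of order coprime to the kernel lies in a unique conjugate of the complement. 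Hence the two conjugates of $C$ must coincide, so $g_1,g_2,g_4$ all lie in a common conjugate of $C$, making $H_3$ cyclic and contradicting $H_3\cong\mathbb{Z}_p\rtimes K'$.

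An identical argument with $g_3$ in place of $g_4$ handles $H_4$. The main obstacle is the second case, which relies crucially on the Frobenius structure of $G_p$: the fact that non-identity elements of order coprime to the kernel lie in unique conjugates of the complement is what forces the two intersection subgroups to live in a common cyclic subgroup containing all three generators.
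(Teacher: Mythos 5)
Your proof is correct, but it takes a genuinely different route from the paper's. The paper argues entirely through the pairwise intersection $H_1\cap H_4$ (resp.\ $H_1\cap H_3$): if $M_4\cong G_p$, then since $H_1\cong\mathrm{S}_4$ or $\mathrm{A}_5$ contains no elements of order $p$, the intersection $H_1\cap H_4$ must be cyclic of order at most $5$; such a group has a totally ordered subgroup lattice (equivalently $m\leq 1$), so it cannot contain the two distinct proper subgroups $H_1\cap H_2\cap H_4\supsetneq H_1\cap H_2\cap H_3\cap H_4$ demanded by general position --- a five-line contradiction that leans on the general-position machinery already set up. You instead classify the subgroups of $G_p$ containing $H_3$ itself and split into two cases: the cyclic case is killed by pure order arithmetic ($m(\mathbb{Z}_n)$ equals the number of prime divisors, forcing $|H_3|\in\{30,60\}$ and hence a cyclic subgroup of order at least $6$ inside $\mathrm{S}_4$ or $\mathrm{A}_5$, which does not exist), and the case $\mathbb{Z}_p\rtimes K'$ is killed by the Frobenius property that distinct conjugates of the complement of $G_p$ intersect trivially, so that $g_4$ pins $M_1\cap M_3$ and $M_2\cap M_3$ into a single conjugate of $C$ and forces $H_3$ to be cyclic after all. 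Your argument is longer but uses only irredundancy rather than the full strength of general position, and it proves the literal statement about $H_3$ (not merely about the enveloping maximal subgroup $M_3$); you should just make explicit that the Frobenius property you invoke holds because $(\mathbb{Z}_p^*)^2$ acts on $\mathbb{Z}_p$ by multiplication, hence fixed-point-freely, and that $\mathrm{S}_4$ and $\mathrm{A}_5$, when they occur as the $H_i$ here, are automatically maximal so that $M_i=H_i$.
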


\begin{proof}
Suppose on the contrary that $M_4\cong G_p$.  The subgroups $L\leq M_4$ are isomorphic to $\mathbb{Z}_p,C$ or $\mathbb{Z}_p\rtimes C$, where $C$ is a cyclic subgroup of one of the isomorphic copies of $\mathbb{Z}_{(p-1)/2}\leq M_4$.  Since $p>5$, the only type which will have a potentially nontrivial intersection with $H_1$ is the cyclic subgroups.  Thus, to be in general position, $H_1\cap H_4$ must be cyclic.  The only cyclic subgroups of $\mathrm{S}_4$ and $\mathrm{A}_5$ have order $2,3$ or $4$, but $m(\mathbb{Z}_2)=m(\mathbb{Z}_3)=m(\mathbb{Z}_4)=1$.  Therefore, $|H_1\cap H_4\cap H_2|=1$.  This contradicts the fact that these subgroups are in general position.
\end{proof}

Now, we can now begin to quantify what is meant by `gluing' sequences.  Since $H_1$ and $H_2$ are isomorphic to either $\mathrm{S}_4$ or $\mathrm{A}_5$, every length four irredundant generating sequence in $G$ is the composite of two length three irredundant generating sequences from $\mathrm{S_4}$~ or $\mathrm{A_5}$.  From this fact, it is clear that the next step is to study the length $3$ irredundant generating sets of $\mathrm{S_4}$~ and $\mathrm{A_5}$.   In their paper~\cite{CC}, Cameron and Cara determine all the length $n-1$ irredundant generating sets of $\mathrm{S}_n$ except when $n=4$ and $6$.  As they suggest, we approach $n=4$ with a computation using GAP~\cite{GAP}, which reveals that elements in length three irredundant (generating) sequences have order $2$ or $3$. 

Now, we turn our attention back to $H_1$ and $H_2$;  $g_2,g_3,g_4$ is an irredundant generating sequence of length $3$ in $\mathrm{S_4}$~ or $\mathrm{A_5}$.  Thus, $g_2,g_3,g_4$ have orders $2$ or $3$.  Repeating this same argument for $g_1,g_3,g_4$ reveals that $g_1$ also must have order $2$ or $3$.  Therefore, 

\begin{prop}
\label{prop:orders}
If $m(G)=4$, the possible orders of elements in an irredundant generating sequence of length $4$ in $\mathrm{PSL}(2,p)$~are $2$ and $3$.  
\end{prop}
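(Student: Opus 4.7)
The plan is to reduce the question to a statement about length-three irredundant generating sequences of $\mathrm{S}_4$ and $\mathrm{A}_5$, and then invoke the GAP computation quoted in the preceding discussion. By Proposition~\ref{prop:twoS}, once $m(G)=4$ at least two of the $H_i$ must be isomorphic to $\mathrm{S}_4$ or $\mathrm{A}_5$; after relabeling we may assume these are $H_1$ and $H_2$.

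The first substantive step I would carry out is a routine transfer of irredundancy. For each $i\in\{1,2,3,4\}$, the triple obtained by deleting $g_i$ from $(g_1,g_2,g_3,g_4)$ generates $H_i$ by definition, and it is also irredundant in $H_i$: if some $g_j$ with $j\ne i$ could be removed from this triple while still generating $H_i$, then reinserting $g_i$ would express all of $G$ from only three of the original four elements, contradicting irredundancy of $(g_1,\ldots,g_4)$ in $G$. Applied to $i=1$, this shows that $(g_2,g_3,g_4)$ is an irredundant generating sequence of length three in $H_1$, and since $H_1\cong\mathrm{S}_4$ or $\mathrm{A}_5$, Lemma~\ref{prop:mustgen} confirms it genuinely generates $H_1$ (rather than some proper subgroup).

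At this point I would invoke the GAP check cited in the text: every element of a length-three irredundant generating sequence of $\mathrm{S}_4$ or $\mathrm{A}_5$ has order $2$ or $3$. This immediately settles the orders of $g_2, g_3$, and $g_4$. Running the identical argument with $i=2$, applied to $(g_1,g_3,g_4)$ inside $H_2$, forces $g_1$ to have order $2$ or $3$ as well. The one step that is not purely structural is the GAP verification itself: by hand one would need to rule out $4$-cycles appearing in length-three irredundant generating triples of $\mathrm{S}_4$ and $5$-cycles appearing in those of $\mathrm{A}_5$. Everything else is bookkeeping derived from the subgroup-lattice considerations already used in Lemma~\ref{prop:mustgen}, so this computer-algebra check is the main obstacle to a fully hand-verified proof.
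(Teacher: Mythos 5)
Your proposal is correct and follows essentially the same route as the paper: reduce to the fact that two of the $H_i$ are isomorphic to $\mathrm{S}_4$ or $\mathrm{A}_5$, observe that $(g_2,g_3,g_4)$ and $(g_1,g_3,g_4)$ are irredundant generating triples of those subgroups, and invoke the GAP computation that such triples consist of elements of order $2$ or $3$. The only difference is that you spell out the (easy) transfer-of-irredundancy step explicitly, which the paper leaves implicit.
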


Now, we will specialize to the case $p\not\equiv\pm 1\mod 10$ and begin the proof of Theorem~\ref{thm:main}.  First, we consider a special case of Prop.~\ref{prop:orders}.

\begin{cor}
If $p\equiv\pm 1\mod 8$ but $p\not\equiv\pm 1\mod 10$ and $m(G)=4$ then all the elements of an irredundant generating sequence of maximal length have order $2$.
\label{allorder2}
\end{cor}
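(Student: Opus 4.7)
The plan is to argue by contradiction, building directly on the running setup that precedes the corollary. By Proposition~\ref{prop:orders}, every $g_i$ already has order $2$ or $3$, so it is enough to rule out order $3$. Since $p\not\equiv\pm 1\bmod 10$, Dickson's theorem says $G$ contains no subgroup isomorphic to $\mathrm{A}_5$ at all; combined with Proposition~\ref{prop:twoS}, this forces the two ``exceptional'' $H_i$ (which must exist because $m(G)=4$) both to be isomorphic to $\mathrm{S}_4$. By maximality we may take $M_1=H_1\cong\mathrm{S}_4$ and $M_2=H_2\cong\mathrm{S}_4$, and by Lemma~\ref{funnysubgroup} each of $M_3,M_4$ is isomorphic to $\mathrm{S}_4$, $\mathrm{D}_{p-1}$, or $\mathrm{D}_{p+1}$.

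Now assume, after relabelling, that $g_1$ has order $3$. Then $\langle g_1\rangle\cong\mathbb{Z}_3$ lies in $M_2\cap M_3\cap M_4$. Inside $M_2\cong\mathrm{S}_4$ this $\langle g_1\rangle$ is a Sylow $3$-subgroup---that is, $g_1$ acts as a $3$-cycle---with normalizer $N_{M_2}(\langle g_1\rangle)\cong\mathrm{S}_3$. The next step is to exploit this Sylow-$3$ picture together with the constraints on $M_3$ and $M_4$: each must contain $\langle g_1\rangle$, and since the correspondence $g_j\notin M_j$, $g_j\in M_i$ (for $i\neq j$) forces $M_j\neq M_2$, each intersection $M_j\cap M_2$ is a proper subgroup of $\mathrm{S}_4$ containing a subgroup of order $3$, hence is isomorphic to $\mathbb{Z}_3$, $\mathrm{S}_3$, or $\mathrm{A}_4$. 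I plan to analyze the cases for $(M_3,M_4)$ and show that the resulting intersection pattern either forces two of the $M_i$ to coincide (violating general position) or forces some $g_j$ with $j\in\{2,3,4\}$ into an $M_i$ it is not supposed to lie in, contradicting irredundance.

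The main obstacle is the dihedral case: when $M_j\cong\mathrm{D}_{p\pm 1}$ contains $\langle g_1\rangle$, the global normalizer $N_G(\langle g_1\rangle)$ is itself dihedral ($\mathrm{D}_{p-1}$ if $3\mid p-1$, otherwise $\mathrm{D}_{p+1}$), and one must carefully track how the local normalizer $\mathrm{S}_3\leq M_2$ embeds into this global dihedral group and into $M_3$ and $M_4$. I expect the hypothesis $p\equiv\pm 1\bmod 8$ (beyond its role in making $\mathrm{S}_4$ maximal) to enter precisely here, by restricting which orders of elements the dihedrals $\mathrm{D}_{p\pm 1}$ can share with $\mathrm{S}_4$, and so ruling out any coherent placement of an order-$3$ element $g_1$ across all four maximal subgroups.
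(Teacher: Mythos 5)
Your setup is the same as the paper's, and it is correct as far as it goes: reduce to excluding order $3$ via Proposition~\ref{prop:orders}, use $p\not\equiv\pm 1\bmod 10$ to force both exceptional subgroups to be $\mathrm{S}_4$, and observe that if $g_1$ has order $3$ then $\langle g_1\rangle$ sits inside $(M_2\cap M_3)\cap(M_2\cap M_4)$, so each $M_j\cap M_2$ is a proper subgroup of $\mathrm{S}_4$ of order divisible by $3$, hence $\mathbb{Z}_3$, $\mathrm{S}_3$, or $\mathrm{A}_4$. But the proof stops exactly where the content begins: the case analysis on the pair of intersections is only announced (``I plan to analyze the cases\dots''), not carried out. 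That analysis is the entire proof, so as written there is a genuine gap.

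For the record, the missing argument is shorter than you anticipate and does not require the global normalizer bookkeeping you flag as the ``main obstacle.'' Writing $L_j=M_2\cap M_j$ and $L_k=M_2\cap M_k$: (i) neither can be $\mathbb{Z}_3$, since then $L_j=\langle g_1\rangle=L_j\cap L_k$, contradicting general position; (ii) they cannot both be $\mathrm{S}_3$, because $\langle g_1\rangle$ is normal in each (unique Sylow $3$-subgroup) while $N_{\mathrm{S}_4}(\langle g_1\rangle)\cong\mathrm{S}_3$ is the \emph{unique} $\mathrm{S}_3$ containing $\langle g_1\rangle$, forcing $L_j=L_k$ and again violating general position; (iii) they cannot both be $\mathrm{A}_4$, since $\mathrm{S}_4$ has only one such subgroup; (iv) an $\mathrm{S}_3$ together with an $\mathrm{A}_4$ is impossible because an $L_k\cong\mathrm{A}_4$ forces $M_k\cong\mathrm{S}_4$ (no $\mathrm{A}_5$, and dihedral or Frobenius maximal subgroups contain no $\mathrm{A}_4$), and $\mathrm{A}_4\unlhd\mathrm{S}_4$ with $\mathrm{S}_4$ maximal in $G$ gives a unique $\mathrm{S}_4$ containing that $\mathrm{A}_4$, so $M_2=M_k$. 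All of this is internal to one copy of $\mathrm{S}_4$ plus general position; the hypothesis $p\equiv\pm 1\bmod 8$ enters only through the existence and maximality of $\mathrm{S}_4$, not through any interaction between $\mathrm{D}_{p\pm 1}$ and the element of order $3$. Your expectation that the dihedral case needs a separate global analysis is a misdirection that would have led you into unnecessary (and possibly inconclusive) work.
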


\begin{proof}
First, note that $\langle g_i\rangle\leq (M_1\cap M_j)\cap (M_1\cap M_k)$, where $1,i,j,k$ are all different.  The only way for $g_i$ to have order $3$ is for $3$ to divide the orders of both $L_j=M_1\cap M_j$ and $L_k=M_1\cap M_k$.  The only subgroups of $\mathrm{S}_4$ with this property are isomorphic to $\mathbb{Z}_3,\mathrm{S}_3$ or $\mathrm{A}_4$.  The intersections $L_j$ and $L_k$ cannot be cyclic of prime order because then the $H_i$ will not be in general position (the intersection of three will be trivial).  First, suppose that both $L_j$ and $L_k$ are isomorphic to $\mathrm{S}_3$.  Further suppose that $g_i$ has order $3$ and $g_i\in L_j\cap L_k$.  The subgroup generated by $g_i$ is normal in $L_j$ and $L_k$.  However, since $\mathrm{S}_3$ is maximal in $\mathrm{S}_4$, the normalizer in $\mathrm{S}_4$ of $\langle g_i\rangle$ is $\mathrm{S}_3$, i.e. there is a unique $\mathrm{S}_3$ which contains $\langle g_i\rangle$.  This contradicts the fact that both $L_j$ and $L_k$ contain $\langle g_i\rangle$.  We cannot have the intersection of two copies of $\mathrm{A}_4$ since a given $\mathrm{S}_4$ has only one of these subgroups.\\
\\All that remains is to show that one cannot have the intersection of an $\mathrm{S}_3$ and a $\mathrm{A}_4$.  In order for one of $L_j,L_k$ to be $\mathrm{A}_4$, it must be that one of $H_j,H_k$ is $\mathrm{S}_4$, since this is the only subgroup of $\mathrm{PSL}(2,p)$ which could contain an $\mathrm{A}_4$ (it is not cyclic or dihedral).   Therefore, we can apply the same argument as we used for two copies of $\mathrm{S}_3$.  In particular, $\mathrm{A}_4$ is normal in $\mathrm{S}_4$, which is maximal in $\mathrm{PSL}(2,p)$.  Thus, there is a unique $\mathrm{S}_4$ which contains the $\mathrm{A}_4$, a contradiction.  Thus, by Cor.~\ref{prop:orders}, $g_2,g_3,g_4$ have order $2$.  Clearly, we could have switched $g_1$ and $g_2$, which shows that $g_1$ also has order $2$.

\end{proof}

The general strategy for combining generating sequences and proving Theorem 3 is now as follows.  Generically, consider 

\begin{align}
\label{biggroup}
Q_R=\langle x_1,x_2,x_3,x_4 | x_i^2=1,R\rangle,
\end{align}

\noindent where $R$ is a set of relations.  By Cor.~\ref{allorder2}, when $R=\emptyset$ and $m(G)=4$, $G$ is a quotient of $Q_R$.  In particular, we have an explicit map which sends $x_i\mapsto g_i$.  The strategy now is to make $R$ as big as possible.  Quantitatively, we choose $R$ to contain all the information we know about the generating sequences of $\mathrm{S}_4$ and the ways in which dihedral groups can intersect with $\mathrm{S}_4$ and each other.  First, we consider how to include information about generating sequences of $\mathrm{S}_4$.  Let $s=(s_1,s_2,s_3)$ be an irredundant generating sequence of $\mathrm{S}_4$.  Let

\begin{align}
\Lambda_n^s=\left\{(\alpha,\beta)\in (1,2,3)^n\times\mathbb{Z}^n\Bigg|\prod_{i=1}^n s^{\beta_i}_{\alpha_i}=1\right\}.
\end{align}

\noindent For $\lambda\in\Lambda_n^s$, define $f_\lambda:\{\text{three letters}\}\rightarrow\{\text{words from three letters}\}$: 

\begin{align}
f_\lambda(x_1,x_2,x_3)=\prod_{i=1}^n x^{\beta_i}_{\alpha_i}.
\end{align}

\noindent Now, for a given $s$, we construct an object $R_s(x_1,x_2,x_3)$ which carries all the information in $s$ needed to build $\mathrm{S}_4$ from a free group such that there is a sequence isomorphic to $s$ as an irredundant generating sequence.  More precisely, let

\begin{align}
\tilde{R}_s(x_1,x_2,x_3)=\bigcup_{n\in\mathbb{N}}\bigcup_{\lambda\in\Lambda_n^s}\{f_\lambda (x_1,x_2,x_3)=1\}.
\end{align}

\noindent By construction $\langle x_1,x_2,x_3|\tilde{R}_s(x_1,x_2,x_3)\rangle \cong \mathrm{S}_4$ and the image of $x_1,x_2,x_3$ under the canonical map is isomorphic to $s$.  In practice, one picks $R_s\subseteq \tilde{R}_s$ such that $|R_s|<\infty$.  An $R_s$ exists because $\mathrm{S}_4$ is finite and a group is uniquely defined by its complete multiplication table.  Thus, for example, one can construct $R_s$ by enumerating the $24$ elements of $\mathrm{S}_4$ in words of elements of $s$ and then encoding the $24\times 24$ multiplication table in terms of relations.  For example, write $\mathrm{S}_4=\{w_i(s_1,s_2,s_3),i=1,...,24\}$ for $w_i$ some functions that send $s_1,s_2,s_3$ to elements of $\mathrm{S}_4$ built from these generators.   If $w_i(s)w_j(s)=w_k(s)$, then one would include in $R_s$ the term $w_i(x)w_j(x)w_k(x)^{-1}=1$ for $x=(x_1,x_2,x_3)$.  One can construct an $R_s$ explicitly in GAP and often it is possible to choose $R_s$ with a size much smaller than $24^2$.  For instance, if $s=((23),(14),(12))$, then for 

\begin{align*}
R_s(x_1,x_2,x_3)=\{x_i^2&=(x_1x_2)^2=(x_2x_3)^3=(x_1x_3)^3=(x_1x_2x_3)^4\\
&=(x_1x_2x_3x_2)^3=1\},
\end{align*}

\noindent we get that $\langle x_1,x_2,x_3|R_s(x_1,x_2,x_3)\rangle \cong \mathrm{S}_4$ and the image of $x_1,x_2,x_3$ under the canonical map is isomorphic to $s$.

Now, we return to the task of considering $Q_R$ from Eq.~\ref{biggroup}.   There will be three cases, depending on the group type of $M_3$ and $M_4$.  Let $S$ be the set of length $3$ irredundant generating sequences of $\mathrm{S}_4$.  One can easily compute $S$ from GAP.  

\begin{enumerate}

\item $M_3\cong M_4\cong\mathrm{S}_4$. For $s_1,s_2,s_3,s_4\in S$, we consider 

\begin{align*}
Q=\langle x_1,x_2,x_3,x_4 | &R_{s_1}(x_1,x_2,x_3),R_{s_2}(x_1,x_2,x_4),\\
&R_{s_3}(x_1,x_3,x_4),R_{s_4}(x_2,x_3,x_4)\rangle.
\end{align*}

A general strategy for determining if a finitely presented group is finite is to use the {\it Todd-Coxeter} algorithm~\cite{TC}.   For example, consider the case $s_1=s_2=s_3=s_4=s=((23),(14),(12))$ from above.   Then, we find -- using the GAP implementation of the Todd-Coxeter algorithm -- that $|Q|=6$ and thus $Q$ is too small to have $G$ as a quotient.  Repeating this calculation for all sets of four elements of $S$, we find that either $|Q|\leq 192$ or the Todd-Coxeter algorithm does not terminate in a reasonable amount of time.  In all the latter cases, there exists $R'\subseteq R$ with $R'=\{(x_ix_j)^{m_{ij}}=1\}$ where

\begin{align*}
m=\begin{pmatrix}2&3&2&3\cr 3&2&3&2\cr 2&3&2&3\cr 3&2&3&2\end{pmatrix},
\end{align*}

and thus $Q$ is a quotient of the Coxeter group $Q'=\langle x_1,x_2,x_3,x_4|R'\rangle$.  This Coxeter group is well known -- $Q' \cong \tilde{A}_3\cong \mathbb{Z}^3\rtimes \mathrm{S}_4$~\cite{KB} which is solvable and thus cannot have the simple group $G$ as a quotient.   In the case $|Q|<192$,  $Q$ is only big enough to have $G$ as a quotient if $p=7$.  A direct computation shows that $G=\mathrm{PSL}(2,7)$ does in fact have $m(G)=4$.

\vspace{3mm}

\item Without loss of generality, $M_3\cong \mathrm{S}_4$ and $M_4$ is dihedral.  We lose one constraint $R_{s_1}(x_1,x_2,x_3)$ and so we will need additional information, from the intersection of dihedral groups and $\mathrm{S}_4$:

\begin{lemma}
Let $K_1=H_1\cap H_4,K_2=H_2\cap H_4$ and $K_3=H_3\cap H_4$.   If $H_3\cong\mathrm{S}_4$ and $H_4$ is dihedral, then no two of the $K_i$ can be isomorphic to $\mathrm{S}_3$.
\end{lemma}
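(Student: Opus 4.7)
My plan is to argue by contradiction, assuming that two of the $K_i$, say $K_1$ and $K_2$, are isomorphic to $\mathrm{S}_3$.  Under the standing hypotheses we have $H_1, H_2 \cong \mathrm{S}_4$ (since $\mathrm{A}_5$ is not maximal when $p \not\equiv \pm 1 \pmod{10}$), so together with $H_3 \cong \mathrm{S}_4$ the three subgroups $H_1, H_2, H_3$ play symmetric roles; the WLOG choice of the pair $(K_1,K_2)$ will therefore cover the cases $(K_1,K_3)$ and $(K_2,K_3)$ after relabeling.

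The key structural input is a small observation about dihedral groups: every element of order $3$ in a dihedral group lies in its rotation (cyclic) subgroup, because reflections have order $2$; since a cyclic group contains at most one subgroup of order $3$, the dihedral group $H_4$ has a unique $\mathbb{Z}_3$ subgroup $C$, and $C$ is contained in every $\mathrm{S}_3$-subgroup of $H_4$.  In particular, $C \le K_1 \cap K_2$.

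I would then push this through the general position condition.  We have
\[
 C \le K_1 \cap K_2 = H_1 \cap H_2 \cap H_4 \le H_1 \cap H_4 = K_1 \cong \mathrm{S}_3,
\]
and the first containment is strict because $g_2 \in H_1 \cap H_4$ (since $2 \ne 1,4$) while $g_2 \notin H_2$.  As $\mathbb{Z}_3$ is a maximal subgroup of $\mathrm{S}_3$, this forces $H_1 \cap H_2 \cap H_4 = C \cong \mathbb{Z}_3$.  But $g_3$ belongs to $H_1 \cap H_2 \cap H_4$ (since $3 \ne 1,2,4$), so $g_3 \in C$ and hence has order dividing $3$.

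The contradiction is now immediate from Corollary~\ref{allorder2}: since $m(G) = 4$ and $p \not\equiv \pm 1 \pmod{10}$, the Whiston--Saxl dichotomy forces $p \equiv \pm 1 \pmod{8}$, so all $g_i$ must have order $2$; in particular $g_3$ cannot lie in $C \cong \mathbb{Z}_3$.  The main obstacle I anticipate is really just pinning down the dihedral-subgroup observation and its unique shared $\mathbb{Z}_3$; after that, the argument is a clean marriage of general position with the order restriction on the $g_i$, and the symmetry between the three $\mathrm{S}_4$-subgroups handles the remaining pairs.
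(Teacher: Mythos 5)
Your proof is correct and takes essentially the same route as the paper's: both rest on the unique cyclic subgroup of order $3$ of the dihedral group $H_4$ being contained in any two distinct $\mathrm{S}_3$-subgroups, so that general position forces $H_1\cap H_2\cap H_4\cong\mathbb{Z}_3$, whence $g_3$ would have order dividing $3$, contradicting the order-$2$ conclusion of Corollary~\ref{allorder2}. (The only nit: the containment witnessed as strict by $g_2$ is the second one in your displayed chain, not the first, but this does not affect the argument.)
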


\begin{proof}
First, we note that $H_4$ has a unique cyclic subgroup $L$ of order $3$ since it is dihedral.  Suppose that $K_1\cong K_2\cong \mathrm{S}_3$.  Since the $H_i$ are in general position, $K_1\neq K_2$ and so $L=H_1\cap H_2\cap H_4$.   However, $\langle g_3\rangle \leq H_1\cap H_2\cap H_4$ which means that $g_3$ has order dividing $3$, a contradiction.   
\end{proof}

\begin{cor}
\label{cor:notwo}
No two of $Order(g_1g_2),Order(g_1g_3),Order(g_2g_3)$ can be $3$
\end{cor}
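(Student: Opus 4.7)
The approach is to convert each hypothesis ``$\mathrm{Order}(g_ig_j)=3$'' into the statement ``$K_k\cong\mathrm{S}_3$'', where $\{i,j,k\}=\{1,2,3\}$, and then contradict the preceding lemma if two such hypotheses hold simultaneously. By Cor.~\ref{allorder2}, every $g_i$ is an involution, so for distinct $i,j$ the subgroup $\langle g_i,g_j\rangle$ is dihedral of order $2\cdot\mathrm{Order}(g_ig_j)$; in particular, $\mathrm{Order}(g_ig_j)=3$ is equivalent to $\langle g_i,g_j\rangle\cong\mathrm{S}_3$.

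Next, I would locate each such $\mathrm{S}_3$ inside the correct intersection from the lemma. Since $g_\ell\in H_m$ whenever $m\neq\ell$, the pair $g_i,g_j$ (with $\{i,j,k\}=\{1,2,3\}$) lies simultaneously in $H_k$ and in $H_4$, hence in $K_k=H_k\cap H_4$. Thus each hypothetical order-$3$ product $g_ig_j$ places a copy of $\mathrm{S}_3$ inside the corresponding $K_k$: $\langle g_1,g_2\rangle\leq K_3$, $\langle g_1,g_3\rangle\leq K_2$, and $\langle g_2,g_3\rangle\leq K_1$.

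The final step is to upgrade ``$K_k$ contains an $\mathrm{S}_3$'' to ``$K_k\cong\mathrm{S}_3$'', so that the lemma's conclusion can be invoked. As a subgroup of the dihedral group $H_4$, $K_k$ is cyclic or dihedral, and the non-abelian $\mathrm{S}_3$ rules out cyclic. As a subgroup of $H_k$, which is $\mathrm{S}_4$ or $\mathrm{A}_5$ (using the lemma's hypothesis $H_3\cong\mathrm{S}_4$ when $k=3$), the possible orders of dihedral subgroups are $2,4,6,8,10$; only $6$ is divisible by $6$, so any dihedral subgroup containing a copy of $\mathrm{S}_3$ must itself be $\mathrm{S}_3$. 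Hence each order-$3$ hypothesis forces $K_k\cong\mathrm{S}_3$, and two such hypotheses make two of the $K_i$ isomorphic to $\mathrm{S}_3$, contradicting the lemma.

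The only mildly delicate step is this last subgroup-lattice check, which rules out a priori larger dihedral intersections; the rest is straightforward bookkeeping in the general-position setup.
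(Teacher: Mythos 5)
Your proposal is correct and follows essentially the same route as the paper: reduce the statement to the preceding lemma via the containments $\langle g_i,g_j\rangle\leq K_k$. The only difference is that you explicitly justify the upgrade from ``$K_k$ contains an $\mathrm{S}_3$'' to ``$K_k\cong\mathrm{S}_3$'' by inspecting the dihedral subgroups of $H_4$ and of $\mathrm{S}_4$/$\mathrm{A}_5$, a step the paper's proof leaves implicit.
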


\begin{proof}
By the lemma, no two of the $K_i$ can be isomorphic to $\mathrm{S}_3$.  This means no two of $\{\langle g_1,g_2\rangle,\langle g_1,g_3\rangle$,$\langle g_2,g_3\rangle\}$ can be isomorphic to $\mathrm{S}_3$.  All the $g_i$ have order $2$, so no two of $\{Order(g_1g_2)$, $Order(g_1g_3)$, $Order(g_2g_3)\}$ can be $3$.  
\end{proof}

A similar result is true for $\mathrm{D_8}$.

\begin{lemma}
No two of $Order(g_1g_2),Order(g_1g_3)$ and $Order(g_2g_3)$ can be 4.
\end{lemma}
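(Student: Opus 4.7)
The plan is to imitate the preceding lemma, with the order-$3$ cyclic subgroup of $H_4$ replaced by an order-$4$ one. First, $H_4 \leq M_4$ is itself dihedral: a subgroup of a dihedral group is either cyclic or dihedral, and a cyclic group contains at most one involution, whereas by Corollary~\ref{allorder2} the subgroup $H_4$ contains the three distinct involutions $g_1, g_2, g_3$. Since $H_4$ is dihedral, its rotation subgroup is cyclic and therefore has at most one subgroup $L$ of order $4$.

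Next, I would observe that any product $g_ig_j$ with $i, j \in \{1, 2, 3\}$, $i \neq j$, lies in $H_4$, and when such a product has order $4$ it must be a rotation (the involutions in $H_4$ are either reflections or the unique central element, and only a product of two distinct reflections can have order $4$). Hence every order-$4$ product among $g_1g_2, g_1g_3, g_2g_3$ generates $L$. If two of them have order $4$, they generate the same cyclic subgroup $L$, so one is a power of the other.

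A short case analysis finishes the proof. For example, if $\langle g_1g_2\rangle = \langle g_1g_3\rangle$, then $g_1g_3 = (g_1g_2)^k$ with $k \in \{1, 3\}$ (the value $k = 2$ would make $g_1g_3$ an involution). The value $k = 1$ gives $g_3 = g_2$, contradicting distinctness of the generators, and $k = 3$ gives $g_3 = g_2 g_1 g_2 \in \langle g_1, g_2\rangle$. The remaining two pairs $\{g_1g_2, g_2g_3\}$ and $\{g_1g_3, g_2g_3\}$ are handled by exactly the same computation (multiplying through by $g_2$ or $g_3$ in place of $g_1$), yielding in each case that one of $g_1, g_2, g_3$ lies in the subgroup generated by the other two. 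But then that element can be deleted from $(g_1, g_2, g_3, g_4)$ without changing the group generated, contradicting irredundancy.

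The substantive input is the uniqueness of the order-$4$ cyclic subgroup inside a dihedral group; after that the argument is a single short calculation in $D_8$. If $4$ does not divide the order of the rotation subgroup of $H_4$ then no $g_ig_j$ can have order $4$ at all, and the statement is vacuous, so there is no additional obstacle to address.
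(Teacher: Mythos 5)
Your proof is correct and takes essentially the same route as the paper's: both arguments hinge on the uniqueness of the cyclic subgroup of order $4$ in the dihedral group containing $g_1,g_2,g_3$, force the two order-$4$ products to generate that same subgroup, and then extract a relation expressing one $g_i$ in terms of the other two, contradicting irredundancy (the paper reaches the unique order-$4$ subgroup via the intermediate identification $H_2\cap H_4\cong H_3\cap H_4\cong \mathrm{D}_8$, but that is only a cosmetic difference). One trivial slip: from $g_1g_3=(g_1g_2)^3=g_2g_1$ you get $g_3=g_1g_2g_1$ rather than $g_2g_1g_2$, though either way $g_3\in\langle g_1,g_2\rangle$ and the contradiction stands.
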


\begin{proof}
Suppose without loss of generality that $Order(g_1g_2)=Order(g_1g_3)=4$.  Then, $H_3\cap H_4\cong H_2\cap H_4\cong \mathrm D_8$.  To see this, note for example that $\langle g_1,g_2\rangle\leq H_3\cap H_4$, but $\langle g_1,g_2\rangle\cong D_8$, which is maximal and so this is equality.  Next, note that $H_4$ has a unique cyclic group of order $4$ which is in common to both of $H_3\cap H_4$ and $H_2\cap H_4$.  Therefore,  $H_2\cap H_3\cap H_4$ is cyclic of order $4$ (it cannot be all of $D_8$ since then the $H_i$ would not be in general position).  In $H_2\cap H_4$, the cyclic group of order four is $\langle g_1g_3\rangle$ and in $H_3\cap H_4$, the cyclic group of order four is $\langle g_1 g_2\rangle$.  The fact that these are the same means $g_1g_2=g_1g_3$ or $g_1g_2=(g_1g_3)^3$.  Then, we can write $g_2=g_1(g_1g_3)^x$, where $x=1$ or $3$.   This contradicts the irredundantcy of the $g_i$.  

\end{proof}

Imposing the conditions $R_{s_2}(x_1,x_2,x_4),R_{s_3}(x_1,x_3,x_4)$, and $R_{s_4}(x_2,x_3,x_4)$ alongside those in the previous two lemmas, the Todd-Coxeter algorithm gives $|Q|<1344$.   After $p=7$, the next prime $\pm 1\mod 8$ is $p=17$, but $|\mathrm{PSL}(2,17)|=2448$.  A direct computation shows that in fact, all the length four irredundant generating sequences of $\mathrm{PSL}(2,7)$ correspond to $M_i\cong \mathrm{S}_4$ for all $i=1,...,4$ and so this case cannot occur.

\vspace{3mm}

\item $M_3$ and $M_4$ are dihedral.  We only have $R_{s_3}(x_1,x_3,x_4)$ and $R_{s_4}(x_2,x_3,x_4)$ by requiring $M_1$ and $M_2$ to be isomorphic to $\mathrm{S}_4$.  Thus, we need further constraints from the following lemma: 

\begin{lemma}
If $M_3$ and $M_4$ are dihedral groups, then $M_1\cap M_2\cong\mathbb{Z}_2^2$.
\end{lemma}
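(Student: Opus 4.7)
The plan is to eliminate each of $\mathrm{S}_3$, $\mathrm{D}_8$, and $\mathrm{A}_4$ as possible isomorphism types for $L := M_1 \cap M_2$. Since $L$ is a proper subgroup of $M_1 \cong \mathrm{S}_4$ containing the two distinct involutions $g_3, g_4$, the classification of subgroups of $\mathrm{S}_4$ leaves only $\mathbb{Z}_2^2, \mathrm{S}_3, \mathrm{D}_8$, and $\mathrm{A}_4$ as candidates.

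Ruling out $L \cong \mathrm{A}_4$ is quickest. Since $\mathrm{A}_4$ is the unique index-$2$ normal subgroup of $\mathrm{S}_4$ and $\mathrm{S}_4$ is maximal in the simple group $G$ (using $p \equiv \pm 1 \pmod{8}$), the normalizer $N_G(\mathrm{A}_4)$ equals a single copy of $\mathrm{S}_4$. Both $M_1$ and $M_2$ must coincide with this normalizer, forcing $M_1 = M_2$ and contradicting general position.

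For $L \cong \mathrm{D}_8$ I plan to run a two-step normalizer argument. The characteristic normal Klein four-subgroup of each $M_i$ lies inside $L$, and $\mathrm{D}_8$ contains exactly two copies of $\mathbb{Z}_2^2$. If the normal $V_4$'s of $M_1$ and $M_2$ coincide, the unique-normalizer trick from the $\mathrm{A}_4$ case forces $M_1 = M_2$. Otherwise they exhaust the two Klein fours of $\mathrm{D}_8$, and I would finish by analysing $L \cap M_3$ and $L \cap M_4$; each must be cyclic or dihedral because $M_3, M_4$ are dihedral, which restricts them to the short list $\{\mathbb{Z}_2, \mathbb{Z}_4, \mathbb{Z}_2^2\}$. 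A case analysis over which of the five involutions of $\mathrm{D}_8$ can play the role of $g_3, g_4$ and the centre should then produce the required contradiction.

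For $L \cong \mathrm{S}_3$ the analogous argument uses the unique cyclic subgroup $\langle g_3 g_4 \rangle$ of order $3$. By Dickson's Theorem its normalizer $D := N_G(\langle g_3 g_4 \rangle)$ is a dihedral maximal subgroup, and $D \neq M_3, M_4$ because $g_3 g_4 \notin M_3 \cup M_4$. Since $\mathrm{S}_3$ is maximal among the cyclic/dihedral subgroups of $\mathrm{S}_4$ that contain $\langle g_3 g_4\rangle$, one gets $M_1 \cap D = M_2 \cap D = L$, producing three distinct maximals sharing the intersection $L$. Using the third involution $g_3 g_4 g_3 \in L$ as a replacement element and combining Cor.~\ref{allorder2} with the analogues of Cor.~\ref{cor:notwo} and its $\mathrm{D}_8$-companion (which in Case~3 apply with either $M_3$ or $M_4$ in the dihedral role) should force the desired contradiction. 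The main obstacle is the $\mathrm{D}_8$ and $\mathrm{S}_3$ cases, where the single normalizer argument is not enough on its own; the essential use of \emph{both} $M_3$ and $M_4$ being dihedral—so that the order-$3$ or order-$4$ element $g_3 g_4$ cannot hide in either dihedral maximal—enters precisely here.
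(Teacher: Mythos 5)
There is a genuine mismatch here, and it matters. The paper's proof never analyses $M_1\cap M_2$: it sets $H=M_3\cap M_4$ and shows that the intersection of the \emph{two dihedral} maximal subgroups is $\mathbb{Z}_2^2$. The ``$M_1\cap M_2$'' in the printed statement is evidently a typo --- what the subsequent Todd--Coxeter step is missing is a relation tying $x_1$ to $x_2$, i.e.\ information about $\langle g_1,g_2\rangle\leq M_3\cap M_4$, and the constraint actually imposed is that $g_1$ and $g_2$ commute. The paper's argument is then short and purely dihedral: $H$, being an intersection of dihedral groups, is cyclic or dihedral; any cyclic subgroup of order $>2$ of a dihedral maximal subgroup lies in its index-two cyclic subgroup, is characteristic there, hence is normal in the dihedral group, which is therefore its full normalizer in the simple group $G$ --- so a cyclic $H$ of order $>2$ would give $M_3=N_G(H)=M_4$; general position rules out $|H|=2$; and applying the same uniqueness to the index-two cyclic part $L$ of a dihedral $H$ forces $|L|=2$, i.e.\ $H\cong\mathbb{Z}_2^2$. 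You instead take the statement at face value and try to classify the intersection of the two copies of $\mathrm{S}_4$, which is a different assertion and not the one the surrounding argument needs.

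Even judged on its own terms, your argument is incomplete. The $\mathrm{A}_4$ case is fine (it is the same normalizer trick the paper uses elsewhere), but the $\mathrm{D}_8$ and $\mathrm{S}_3$ cases are left as plans: ``a case analysis \ldots should then produce the required contradiction'' is not a proof, and it is not clear the contradiction is there to be found. In the $\mathrm{S}_3$ case, for instance, you exhibit a third maximal subgroup $D=N_G(\langle g_3g_4\rangle)$ distinct from $M_1,\dots,M_4$; but three maximal subgroups sharing a common subgroup is not by itself inconsistent with general position, which constrains only the four $M_i$. If you redirect the whole argument at $M_3\cap M_4$, both problematic cases disappear: the only subgroups of a dihedral group are cyclic or dihedral, and the single observation that a cyclic subgroup of order $>2$ has a unique dihedral maximal overgroup eliminates everything except $\mathbb{Z}_2^2$ in two lines.
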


\begin{proof}
Since $M_3$ and $M_4$ are dihedral, $H=M_3\cap M_4$ must be cyclic or dihedral.   Suppose that $H$ is cyclic and let $K$ be an index two cyclic subgroup of $M_3$.  Because the $M_i$ are in general position, $H$ cannot have order $2$.  Therefore, $H\unlhd K$.   However, every subgroup of a cyclic subgroup is characteristic and so $H\unlhd M_3$.  Since $M_3$ is maximal in $G$, it must be that $M_3=N_G(H)$.  However, the same argument shows that $M_4=N_G(H)$.  Therefore, $M_3=M_4$, a contradiction.  Therefore, $H$ must be dihedral.  Let $L\leq H$ be the cyclic subgroup of index $2$.  By our previous discussion, if $|L|>2$, there would be a unique dihedral group in $G$ which contains $L$, which is a contradiction.  Therefore, we must have $|L|=2$ and so $H\cong\mathbb{Z}_2^2$. 
\end{proof}

Imposing the conditions $R_{s_3}(x_1,x_3,x_4)$ and $R_{s_4}(x_2,x_3,x_4)$ alongside the constraint form the above lemma gives two outcomes.  When the Todd-Coxeter algorithm terminates in a reasonable amount of time, $|Q|<1344$ which has already been ruled out.  There are two configurations of $Q$ for which the Todd-Coxeter algorithm does not terminate in a reasonable amount of time.   In one case, $Q$ is a quotient of $\tilde{A}_3$, which we have already discussed is not possible by solvability (Coxeter diagram on the left below).  In the second case, $Q$ is a quotient of the Coxeter group represented by the diagram on the right, below:

\begin{center}
\begin{tikzpicture}[scale=1.2]
	\draw (0,0.87) -- (-1,0);
	\draw (1,0) -- (0,0.87);	
	\draw (-1,0) -- (0,1.73);
	\draw (1,0) -- (0,1.73);	
	\draw [fill] (0,1.73) circle [radius=0.05];
	\draw [fill] (-1,0) circle [radius=0.05];
	\draw [fill] (0,0.87) circle [radius=0.05];
	\draw [fill] (1,0) circle [radius=0.05];
	\begin{scope}[shift={(3,0)}]
	\draw (0,0.87) -- (-1,0);
	\draw (1,0) -- (0,0.87);	
	\draw (-1,0) -- (0,1.73);
	\draw (1,0) -- (0,1.73);	
	\draw [fill] (0,1.73) circle [radius=0.05];
	\draw [fill] (-1,0) circle [radius=0.05];
	\draw [fill] (0,0.87) circle [radius=0.05];
	\draw [fill] (1,0) circle [radius=0.05];
	\node at (-0.3,0.6) {$4$};
	\node at (-0.5,0.9) {$4$};
\end{scope}
\end{tikzpicture}
\end{center}

Let $C$ denote the Coxteter group corresponding to the diagram on the right.  Even though the Todd-Coxeter algorithm does not terminate, it can be used to determine subgroups of finite index.  One finds that $C$ has a subgroup $C'\unlhd C$ such that $C/C'\cong\mathrm{S}_4$.  Furthermore, a straightforward application of the relations shows that $C'$ is generated by three elements which mutually commute.  Since $C'$ is abelian and $C/C'$ is solvable, $C$ is also solvable and thus $G$ cannot be a quotient.

\end{enumerate}

\section{$\mathrm{PSL}(2,p)$~and the Replacement Property}

While it is not known in general if $G=\mathrm{PSL}(2,p)$ satisfies the replacement property, in some special cases, we can say whether $G$ has this property or not.    

\begin{thm}
(R. K. Dennis) Let $G$ be a finite group, $m=m(G)$ and $s=(g_1,...,g_m)$ is an irredundant generating sequence of length $m$.  Let $F=\{M_1,...,M_m\}$ be an associated family of maximal subgroups in general position.  Assume that for any such $F$, there exists one of the maximal subgroups, say $M_m$ such that
\begin{enumerate}
	\item $M_m=\langle g_1,...,g_{m-1}\rangle$
	\item $m(M_m)=m-1$
	\item $M_m$ satisfies the replacement property.
\end{enumerate}
Then, $G$ satisfies the replacement property
\end{thm}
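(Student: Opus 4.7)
The plan is to verify the definition of the replacement property directly: fix an irredundant generating sequence $s=(g_1,\ldots,g_m)$ of length $m$ and a nonidentity $g\in G$, and produce an index $i$ such that replacing $g_i$ by $g$ in $s$ still generates $G$. I would begin by selecting a corresponding family $F=(M_1,\ldots,M_m)$ of maximal subgroups in general position, which exists by Proposition~7, and applying the hypothesis to obtain (after relabeling) a distinguished $M_m$ with $M_m=\langle g_1,\ldots,g_{m-1}\rangle$, $m(M_m)=m-1$, and the replacement property on $M_m$. Observe first that $g_m\notin M_m$, for otherwise $\langle g_1,\ldots,g_{m-1}\rangle=\langle g_1,\ldots,g_m\rangle=G$, contradicting the irredundancy of $s$.

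The argument then splits on whether $g$ lies in $M_m$. If $g\notin M_m$, then the replaced sequence $(g_1,\ldots,g_{m-1},g)$ generates $\langle M_m,g\rangle$, which equals $G$ by the maximality of $M_m$. If $g\in M_m$, I would first verify that $(g_1,\ldots,g_{m-1})$ is itself an irredundant generating sequence of $M_m$ of maximal length $m(M_m)=m-1$: it generates $M_m$ by condition (1), and if removing some $g_j$ with $j<m$ still generated $M_m$, then adjoining $g_m$ would yield all of $G$, contradicting the irredundancy of $s$ in $G$. Applying the replacement property of $M_m$ to the nonidentity element $g\in M_m$ then yields an index $i<m$ such that $(g_1,\ldots,g_{i-1},g,g_{i+1},\ldots,g_{m-1})$ generates $M_m$; appending $g_m$ gives a sequence generating $\langle M_m,g_m\rangle=G$.

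There is no serious obstacle here; the three hypotheses on $M_m$ have been arranged precisely to cover the two cases. The only technical checkpoint is the short argument that irredundancy in $G$ passes to irredundancy of the truncated sequence inside $M_m$, and this uses nothing beyond the fact that $g_m$ is a single element lying outside $M_m$. The main conceptual value of the theorem, on this reading, is that it reduces the verification of the replacement property for $G$ to an inductive input on a single well-chosen maximal subgroup together with the mild compatibility condition $M_m=\langle g_1,\ldots,g_{m-1}\rangle$ for each corresponding family.
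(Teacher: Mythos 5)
Your proof is correct, but it takes a different route from the paper's. The paper does not argue at the level of elements at all: it works through Proposition~\ref{important}, producing for each $j\neq m$ a maximal subgroup $N_j$ of $M_m$ containing $M_m\cap M_j$, observing that $(N_1,\ldots,N_{m-1})$ corresponds to $(g_1,\ldots,g_{m-1})$ inside $M_m$, invoking the replacement property of $M_m$ to conclude $N_1\cap\cdots\cap N_{m-1}=\{e\}$, and then deducing $M_1\cap\cdots\cap M_m=\{e\}$ so that the sufficient criterion of Proposition~\ref{important} applies to $G$. You instead verify the definition directly, splitting on whether the candidate element $g$ lies in $M_m$: if not, replace $g_m$ and use maximality; if so, replace some $g_i$ with $i<m$ inside $M_m$ and append $g_m$. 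Both arguments lean on the same three hypotheses in the same places, but yours is more self-contained: it avoids the implicit use of the converse of Proposition~\ref{important} (replacement property implies trivial intersection of every corresponding family), which the paper uses without comment, and it makes explicit the check that $(g_1,\ldots,g_{m-1})$ is an irredundant generating sequence of $M_m$ of length $m(M_m)$ --- a point the paper asserts but does not justify, and which is needed before the replacement property of $M_m$ (defined only for sequences of maximal length) can be invoked. What the paper's route buys in exchange is the stronger structural conclusion that every family of maximal subgroups corresponding to $s$ intersects trivially, which is of independent use elsewhere in the paper (e.g., in the proof of Corollary~\ref{cor:p-1}).
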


\begin{proof}
Note that for $j\neq m$ we have $M_m\cap M_j\neq M_m$ since $F$ is in general position.  Thus, there exists $N_j\in \mathrm{Max}(M_m)$ (the set of maximal subgroups of $M_m$) with $N_j\geq M_m\cap M_j$. Hence, $F'=\{N_1,...,N_{m-1}\}$ is a family of maximal subgroups of $M_m$ in general position associated to the irredundant generating sequence $s'=(g_1,...,g_{m-1})$, since $M_m\cap M_j\geq \langle s(\hat{m},\hat{j})\rangle$ (the sequence generated by all the $g_i$ for $i$ not $m$ and not $j$).  Since $M_m$ satisfies the replacement property, we have that $N_1\cap\cdots\cap N_{m-1}$ is trivial.  Thus, $M_1\cap \cdots \cap M_m=(M_m\cap M_1)\cap\cdots\cap (M_m\cap M_{m-1})\leq N_1\cap \cdots\cap N_{m-1}=\{e\}$.  Therefore, $G$ satisfies the replacement property.

\end{proof}

\begin{cor}
Let $G=\mathrm{PSL}(2,p)$, $p$ prime and $m(G)=4$.  Then, $G$ satisfies the replacement property.
\end{cor}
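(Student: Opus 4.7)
The plan is to apply the preceding theorem of Dennis, with the distinguished maximal subgroup $M_m$ chosen to be an $\mathrm{S}_4$- or $\mathrm{A}_5$-subgroup of $G$. Fix an irredundant generating sequence $s=(g_1,\ldots,g_4)$ of $G$ and an associated family $F=(M_1,\ldots,M_4)$ of maximal subgroups in general position, with $H_i=\langle g_j : j\neq i\rangle\leq M_i$. The task is to exhibit some index $m$ satisfying conditions (1)--(3) of Dennis's theorem.

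First I would invoke Proposition~\ref{prop:twoS}, which in the case $m(G)=4$ forces at least two of the $H_i$ to be isomorphic to $\mathrm{S}_4$ or $\mathrm{A}_5$; pick such an index and call it $m$. Since Dickson's theorem lists $\mathrm{S}_4$ and $\mathrm{A}_5$ among the maximal subgroups of $\mathrm{PSL}(2,p)$ whenever they embed, the inclusion $H_m\leq M_m$ must be an equality, giving $M_m=H_m=\langle g_i : i\neq m\rangle$; this is condition~(1). Condition~(2) is then immediate: $m(\mathrm{S}_4)=3$ by Whiston's formula $m(\mathrm{S}_n)=n-1$, and $m(\mathrm{A}_5)=m(\mathrm{PSL}(2,5))=3$.

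The main obstacle is verifying condition~(3), i.e.\ that $\mathrm{S}_4$ and $\mathrm{A}_5$ themselves satisfy the replacement property. My approach is via Proposition~\ref{important}: it suffices to show that for every irredundant generating sequence of length three in $\mathrm{S}_4$ (respectively $\mathrm{A}_5$), every corresponding family of three maximal subgroups in general position has trivial intersection. The list of maximal subgroups is short in each case---$\mathrm{A}_4$, $\mathrm{D}_8$, $\mathrm{S}_3$ inside $\mathrm{S}_4$, and $\mathrm{A}_4$, $\mathrm{D}_{10}$, $\mathrm{S}_3$ inside $\mathrm{A}_5$---and by Lemma~\ref{prop:mustgen} each such group is strongly flat, so every length-3 irredundant sequence already generates. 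I would therefore either run a direct GAP enumeration over the (few) length-three irredundant sequences together with their possible supporting triples of maximal subgroups, or, more structurally, argue that a nontrivial element $x$ in the triple intersection $N_1\cap N_2\cap N_3$ would generate a cyclic subgroup whose normalizer in $\mathrm{S}_4$ (or $\mathrm{A}_5$) is forced to contain each of the three distinct maximals of the family---the same type of normalizer/uniqueness argument already exploited in the dihedral analysis of Section~3 and in the proof of Corollary~\ref{allorder2}. Either route yields a contradiction with the general-position hypothesis, completes condition~(3), and thereby delivers the corollary through Dennis's theorem.
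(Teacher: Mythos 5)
Your proposal is correct and follows essentially the same route as the paper: apply Dennis's theorem with the distinguished maximal subgroup taken to be an $\mathrm{S}_4$ or $\mathrm{A}_5$ member of the family, with condition (1) coming from the fact that such an $H_m$ is itself maximal and condition (2) from $m(\mathrm{S}_4)=m(\mathrm{A}_5)=3$. The only divergence is that the paper simply asserts condition (3) --- that $\mathrm{S}_4$ and $\mathrm{A}_5$ satisfy the replacement property --- while you propose to verify it; that extra care is reasonable, though your ``normalizer'' sketch is loose (an element $x$ lying in a maximal subgroup $N_i$ does not force $\langle x\rangle$ to be normal in $N_i$), so the finite GAP enumeration via Proposition~\ref{important} is the reliable way to close that step.
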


\begin{proof}
We know that every $F$ must contain a group isomorphic to either $\mathrm{S}_4$ or $\mathrm{A}_5$.   By Lemma~\ref{prop:mustgen}, length 3 irredundant sequences must generate.  Both $\mathrm{S}_4$ and $\mathrm{A}_5$ satisfy the replacement property, so $G$ does as well by the theorem.
\end{proof}

It turns out that the theorem can also be applied to $M_{11}$, the sporadic group since $\mathrm{PSL(2,7)}$ is in every $F$, $m(M_{11})=5$, and we just showed that $\mathrm{PSL(2,7)}$ satisfied the replacement property.   However, $\mathrm{PSL}(2,p)$ does not satisfy the replacement property in general:

\begin{thm}
Let p be a prime with $p\equiv +1 \mod 8$.  Let $G=\mathrm{PSL}(2,p)$.  If $m(G)=3$, then $G$ fails the replacement property.  
\label{thesistheorem}
\end{thm}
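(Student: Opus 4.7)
The plan is to invoke the contrapositive of Proposition~\ref{important}: to show $G$ fails the replacement property, it suffices to exhibit an irredundant generating sequence $s = (g_1, g_2, g_3)$ of length $m(G) = 3$ together with a corresponding family of maximal subgroups $(M_1, M_2, M_3)$ in general position whose triple intersection contains a nontrivial element $z$. For each $i$, such a $z$ lies in $M_i$ along with $g_j$ and $g_k$ (where $\{i,j,k\} = \{1,2,3\}$), so each replacement sequence $(g_1,\dots,g_{i-1},z,g_{i+1},\dots,g_3)$ generates a subgroup of $M_i \neq G$, and therefore $s$ fails replacement at $z$.

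To produce the configuration I would exploit the hypothesis $p \equiv 1 \mod 8$, which by Dickson's theorem guarantees both the existence of $\mathrm{S}_4$ maximal subgroups in $G$ and a Sylow $2$-subgroup which is dihedral of order at least $16$. Fix a nontrivial involution $z \in G$; its centralizer $C_G(z)$ is dihedral of order $p - 1$ and contains several involutions commuting with $z$. Each such commuting partner produces a Klein four-group $V$ through $z$, and for suitably chosen $V$ the normalizer $N_G(V)$ is a maximal subgroup isomorphic to $\mathrm{S}_4$ containing $z$. The goal is to select three pairwise distinct maximal subgroups $M_1, M_2, M_3$ each containing $z$ --- at least two of them isomorphic to $\mathrm{S}_4$, with the third possibly a dihedral $\mathrm{D}_{p-1}$ --- and, via pairwise intersections, to choose $g_i \in (M_j \cap M_k) \setminus M_i$ for each $i$. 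Irredundancy of $(g_1,g_2,g_3)$ is then automatic because each pair $\{g_j, g_k\}$ lies in the proper subgroup $M_i$, and the full sequence generates $G$ because any maximal subgroup containing all three $g_i$ would have to coincide with one of the $M_i$, contradicting $g_i \notin M_i$.

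The main obstacle is this explicit construction: producing three distinct maximal subgroups through $z$ that sit in true general position and admit elements $g_i$ of the required form. This requires a careful analysis of how $\mathrm{S}_4$ and $\mathrm{D}_{p-1}$ subgroups interact inside $\mathrm{PSL}(2,p)$ --- in particular, counting the number of $\mathrm{S}_4$ subgroups through a fixed involution, which uses the stronger hypothesis $p \equiv 1 \mod 8$ rather than merely $p \equiv \pm 1 \mod 8$ --- and verifying that the triple intersection remains nontrivial while each pairwise intersection $M_j \cap M_k$ still contains an element missed by the third $M_i$. The $p \equiv 1 \mod 8$ hypothesis should provide the additional $\mathrm{S}_4$ subgroups needed to realize this simultaneous configuration.
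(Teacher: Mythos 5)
Your overall strategy is the same as the paper's: produce a length-$3$ irredundant generating sequence whose corresponding maximal subgroups $(M_1,M_2,M_3)$ have a nontrivial common element $z$, so that by the contrapositive of Proposition~\ref{important} the sequence fails replacement at $z$; the paper likewise realizes this with two copies of $\mathrm{S}_4$ and one dihedral maximal subgroup through a common involution $w$. The problem is that everything you label as ``the main obstacle'' is the actual content of the theorem, and you leave it undone. The paper's proof is almost entirely occupied with the explicit construction: it writes down traceless matrices $A,B,W$ and a matrix $C$ depending on an element $i$ of order $4$ in $\mathbb{F}_p^*$ and on a square root of $2$ (this is exactly where $p\equiv 1\bmod 8$ enters --- $p\equiv 1\bmod 4$ for the order-$4$ element and $p\equiv\pm 1\bmod 8$ for $\sqrt{2}$ --- not through a count of $\mathrm{S}_4$ subgroups through a fixed involution), then uses trace conditions to force $\langle wa,wc\rangle\cong\langle wb,wc\rangle\cong\mathrm{S}_4$ and $\langle wa,wb\rangle$ dihedral, and finally verifies by a word-length/index argument that $w$ lies in a maximal subgroup over each pair. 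Without some such explicit verification you have not shown that the desired configuration exists, so the proposal is a plan rather than a proof.

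There is also a logical slip in the part you do argue. You claim generation of $G$ by $(g_1,g_2,g_3)$ is automatic ``because any maximal subgroup containing all three $g_i$ would have to coincide with one of the $M_i$.'' That implication is unjustified: nothing prevents a fourth maximal subgroup, distinct from $M_1,M_2,M_3$, from containing all three elements, in which case the sequence generates a proper subgroup. The paper avoids this by arranging that $\langle wa,wc\rangle$ itself is a maximal $\mathrm{S}_4$ and then checking concretely that $wb\notin\langle wa,wc\rangle$ (via the order of $ab=(wa)(wb)$ exceeding $4$), so the three elements generate a group strictly containing a maximal subgroup and hence all of $G$. You would need an analogous concrete check in your setup.
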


\begin{proof}
In order to show that $G$ fails the replacement property, this proof produces an explicit example of an element $w\in G$ and a length three generating set $\{g_1,g_2,g_3\}$ such that replacing any $g_i$ by $w$ will result in a set which no longer generates $G$.   Since it is easier to work with matrices than with elements in $\mathrm{PSL}(2,p)$, often, elements in $\mathrm{SL}(2,p)$ will be used instead of their projections into $G$.  For the sake of clarity, capital letters will denote elements in $\mathrm{SL}(2,p)$ and lower case letters will denote their projections in $G$.  

\vspace{5mm}

Let $a,b,c,w\in G$ and for the canonical projection, $\pi:$ $\mathrm{SL}(2,p)$$\rightarrow G$,  let $\pi(A)=a,\pi(B)=b, \pi(C)=c$ and $\pi(W)=w$.  We will construct $a,b,c,w$ such that $\{wa,wb,wc\}$ is a length $3$ irredundant generating set of $G$, but the element $w$ will be such that it cannot replace any of these elements to recover a generating sequence.  For $r,s,t,u\in\mathbb{F}_p$ let

\begin{align}
A=\begin{pmatrix}r & s\cr s & -r\end{pmatrix}\hspace{5mm}B=\begin{pmatrix}t & u\cr u & -t\end{pmatrix}\hspace{5mm}W=\begin{pmatrix}0 & -1\cr 1 & 0\end{pmatrix}.
\end{align}

Since $A$ and $B$ have determinant $1$, $r^2+s^2=t^2+u^2=-1$.  Note that $A,B$ and $W$ are traceless.  A standard result~\cite{MS} then says that $A,B$ and $W$ have order $4$ and $a,b$ and $w$ have order $2$.  Furthermore, notice that

\begin{align}
WA=\begin{pmatrix}-s & r\cr r & s\end{pmatrix}\hspace{5mm}WB=\begin{pmatrix}-u & t\cr t & u\end{pmatrix}\hspace{5mm}AW=\begin{pmatrix}s & -r\cr -r & -s\end{pmatrix}\hspace{5mm}BW=\begin{pmatrix}u & -t\cr -t & -u\end{pmatrix},
\end{align}

and so $AW=-WA$ and $BW=-WB$.  Since $AW,AB$ are still traceless, $aw$ and $bw$ also have order $2$.  Therefore, $\langle a,w\rangle = \{a,w,aw,\mathrm{id}\}\cong\mathbb{Z}_2\times\mathbb{Z}_2$ and likewise, $\langle b,w\rangle\cong \mathbb{Z}_2\times\mathbb{Z}_2$.   Now, generically write 

\begin{align}
C=\begin{pmatrix}\alpha & \beta\cr \gamma & \delta\end{pmatrix},
\end{align}

where $\alpha\delta-\beta\gamma=1$.  Let $\alpha+\delta=0$ so that $c$ has order $2$.  Furthermore, $\mathrm{Tr}(WC)=+1$ which means that the order of $wc$ is $3$~\cite{MS}.  Note that

\begin{align}
WC=\begin{pmatrix}-\gamma & -\delta\cr \alpha& \beta\end{pmatrix},
\end{align}

and so the condition $\mathrm{Tr}(WC)=+1$ becomes $\beta-\gamma=+1$.  Choose $\beta=0$ so that $\gamma=-1$.  Furthermore since $\alpha\delta-\beta\gamma=1$, $\beta=0$ implies that $\alpha=\delta^{-1}$ and since the trace of $C$ is zero, $\alpha=-\delta$.  Thus, $\alpha^{-1}=-\alpha$, or $\alpha$ has order $4$ in $\mathbb{F}_p$.  Does such an element exist?  Since $p\equiv 1\mod 8$, $8|(p-1)$, which is the order of the cyclic group $\mathbb{F}_p^*$.  Therefore, $\mathbb{F}_p^*$ has an element of order $8$ and so also has an element of order $4$.  Fix such an element and call it $i$.  Then, 

\begin{align}
C=\begin{pmatrix}-i & 0\cr -1 & i\end{pmatrix}.
\end{align}


Note that $w(cw)w^{-1}=wcww=wc$.  However, since $(cw)(wc)=1$, $w(cw)w^{-1}=(cw)^{-1}$.  Therefore, $\langle c,w\rangle=\langle w,wc\rangle=\langle x,y|x^2=y^3=1,xyx^{-1}=y^{-1}\rangle\cong \mathrm{S}_3$.   The next step is to show that $\langle aw,cw\rangle\cong\mathrm{S}_4$.  The idea is to use the trace technology laid out in~\cite{DM}.  The trace of $WA$ is $0$ and the trace of $WC=+1$.  The Main Theorem in~\cite{DM} requires that $WCWA$ has a particular trace.  Multiplying these elements gives rise to the following matrix:

\begin{align}
WCWA=\begin{pmatrix}-s-i r&r-i s\cr i s&-i r\end{pmatrix},
\end{align}

so that $\mathrm{tr}(WCWA)=-s-2i r$.  The required constraint from the Theorem is that $(s+2ir)^2=2$.  If this holds, then $\langle aw,cw\rangle\cong\mathrm{S}_4$ if $\mathrm{tr}\left([WA,WC]\right)=+1$.  Simple arithmetic using the forms of $A,C$ and $W$ shows that $\mathrm{tr}\left([WA,WC]\right)=2 s^2+4i s r-3 r^2$.  Setting this expression equal to $1$ and using the constraint that $s^2+r^2=-1$ (from the determinant), one finds that  $3s^2+4isr-2r^2=0$,  which has solution $r=\left(i\pm 1/\sqrt{2}\right)s$.  Inserting this back into $s^2+r^2=-1$ yields


\begin{align}
\label{eq:answer}
s^2=-\frac{2}{9}\pm\frac{4}{9}i\sqrt{2}=\left[\frac{1}{3}\left(2i\pm\sqrt{2}\right)\right]^2,
\end{align}

and so the question has simply boiled down to the existence of an element $\zeta\in\mathbb{F}_p$ such that $\zeta^2=2$ (and $p\neq 3$, so $3^{-1}$ makes sense).   It is a standard result in elementary number theory (c.f.~\cite{KZ}) that $2$ has a square root if $p\equiv \pm 1\mod 8$ (fix one and call it $\sqrt{2}$).   Using the expressions for $r$ and $s$ above, a quick computation shows that $-s-2i r=\sqrt{2}$, as required by the theorem.  Therefore, $\langle wa,wc\rangle\cong\mathrm{S}_4$.  An analogous discussion shows that if one fixes $s$ as one solution to Eq.~\ref{eq:answer}, then picking the other solution for $u$ and constructing $t$ as was done for $r$ will give $\langle wb,wc\rangle\cong\mathrm{S}_4$ as well.  

\vspace{5mm}

The strategy to demonstrate that $w$ cannot replace $wa,wb$ or $wc$ will be to show that $w$ is in the subgroups generated by (maximal subgroups containing) $\langle wa,wc\rangle$, $\langle wb,wc\rangle$ and $\langle wa,wb\rangle$.  The first step in this process is to prove that $\langle wa,wc\rangle=\langle a,c,w\rangle$.  Note that $WAWC=-AWWC=AC$, and since $(ac)(ca)=1$, $ac,ca\in \langle wa,wc\rangle$.  Furthermore, since $(wc)(cw)=(aw)(wa)=1$, $cw,wc,aw,wa\in \langle wa,wc\rangle$.  Now, take any element $x\in\langle a,c,w\rangle$.  By construction, such an element can be written as a string in the alphabet $a,c,w,a^{-1}=a,c^{-1}=c,w^{-1}=w$ (no need to worry about uniqueness).  Suppose that $x$ can be written with an even number of letters in the string.  Then, this element is in $\langle wa,wc\rangle$ because every possible pairing of letters from the above alphabet is in $\langle wa,wc\rangle$.  
 
 Instead of an even number of letters, suppose that $x$ can be written as a string with an odd number of letters from the alphabet.  Then, one can form $x$ from a string in $\langle wa,wc\rangle$ by adding one of $a,b,w$.  This is clear because if there are $n$ letters that make up $x$, then $n-1$ will be an even number and so the substring of the first $n-1$ letters will be in $\langle wa,wc\rangle$ by the preceding argument.   Thus, every element in $\langle a,c,w\rangle$ can be formed from an element in $\langle wa,wc\rangle$ by adding one of $a,c,w$ or $\mathrm{id}$.  This means that $|\langle a,c,w\rangle|\leq 4|\langle wa,wc\rangle|$.

However, from above, $\langle wa,wc\rangle\cong \mathrm{S}_4$ so $|\langle a,c,w\rangle|\leq 96$.  Furthermore, by Dickson's Theorem, $\mathrm{S}_4$ is maximal in $G$ and so no proper subgroup can contain $\langle wa,wc\rangle$.  Therefore, either $\langle a,c,w\rangle=\langle wa,wc\rangle$ or $\langle a,c,w\rangle=G$.  Since $p\equiv 1\mod 8$, $p\geq 17$ so $|G|\geq 2448>96$ and thus $\langle a,c,w\rangle=\langle wa,wc\rangle$.  By an analogous argument, $\langle b,c,w\rangle=\langle wb,wc\rangle$.  The last consideration is to study $\langle wa,wb\rangle$.  This group is generated by two elements of order $2$ and so must be dihedral.  To see how large it is, one needs to know the order of $wawb=awwb=ab$.  This amounts to computing the trace of $AB$, which is

\begin{align}
\mathrm{tr}(AB)=2(rt+su)=-8i/3.
\end{align}

This is certainly not zero and a quick arithmetic computation shows that it is also not $\pm 1$ or $\pm\sqrt{2}$.  Therefore, from a characterization of element orders based on traces, the order of $ab$ is more than $4$ and so $ab\not\in\mathrm{S}_4$.  It is also clear that $\langle wa,wb\rangle\neq G$ because $G$ is not dihedral.  The final step before concluding is to show that $\langle a,b,w\rangle$ is a proper subgroup of $G$.   This procedure is similar to the one above by considering the index of $\langle wa,wb\rangle$ in $\langle a,b,w\rangle$.  Since $wawb=ab\in \langle wa,wb\rangle$, as before, every possible pair of letters in $\langle a,b,w\rangle$ is in $\langle wa,wb\rangle$ and therefore, one arrives at the same bound as earlier $|\langle a,b,w\rangle|\leq 4|\langle wa,wb\rangle|$.

Recall that $\langle wa,wb\rangle$ is dihedral.  From Dickson's Theorem, the largest dihedral subgroup of $G$ has order $p+1$.  Therefore  

\begin{align}
|\langle a,b,w\rangle|\leq 4|\langle wa,wb\rangle|\leq 4(p+1)<p(p+1)(p-1)/2.
\end{align}

Since for $p\geq 17$, $p(p-1)/2=136$.  Let $\langle wa,wb\rangle\leq M<G$ be maximal.  Since $\langle a,b,w\rangle$ is proper and contains $\langle wa,wb\rangle$, $w\in M$.  

\vspace{5mm}

Now, all the machinery is in place to conclude.   The set $\{wa,wb,wc\}$ will generate $G$ because $wb\not\in \langle wa,wc\rangle$ and $\langle wa,wc\rangle$ is maximal, so the subgroup generated by all three elements, which contains a maximal subgroup, must be all of $G$.  Furthermore, it is clear that $w$ cannot replace any of $wa,wb,wc$ because $w$ is in the maximal subgroup containing each pair.  Explicitly, the set $\{w,wb,wc\}$ cannot generate $G$ because $w\in \langle wb,wc\rangle\cong\mathrm{S}_4$.  The same holds for replacing $wb$.  Finally, $w$ cannot replace $wc$ because the maximal subgroup which contains $\langle wa,wb\rangle$ also contains $w$ and so $\langle w,wa,wb\rangle\leq M<G$.  Therefore $G$ fails the replacement property if $m(G)=3$.
\end{proof}

\begin{cor}
If $p\not\equiv\pm 1\mod 10$ and $p\neq 7$ but $p\equiv+ 1\mod 8$, then $G$ fails the replacement property.
\end{cor}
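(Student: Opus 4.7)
The plan is to obtain this corollary by directly composing the two preceding results, Theorem~\ref{thm:main} and Theorem~\ref{thesistheorem}. First I would observe that the hypotheses of the corollary — namely $p \not\equiv \pm 1 \mod 10$ together with $p \neq 7$ — are precisely the hypotheses under which Theorem~\ref{thm:main} guarantees $m(G) = 3$. Having established $m(G) = 3$, the remaining hypothesis $p \equiv +1 \mod 8$ puts us exactly in the setting of Theorem~\ref{thesistheorem}, which then concludes that $G$ fails the replacement property.

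There is really no substantive obstacle: all the work has been done in the two preceding theorems. The main theorem required ruling out quotients of certain affine Coxeter groups and a careful analysis of the possible isomorphism types of $M_3$ and $M_4$, while Theorem~\ref{thesistheorem} required the explicit $\mathrm{SL}(2,p)$ matrix construction of $A, B, C, W$, exploiting both a fourth root of unity and a square root of $2$ in $\mathbb{F}_p$ (together requiring $p \equiv 1 \mod 8$). The extra hypothesis $p \not\equiv \pm 1 \mod 10$ with $p \neq 7$ enters only to exclude the exceptional case $m(G) = 4$, in which the previous corollary would instead conclude that $G$ \emph{satisfies} the replacement property. Thus the proof is genuinely a single-line composition: invoke Theorem~\ref{thm:main} to get $m(G) = 3$, then quote Theorem~\ref{thesistheorem}.
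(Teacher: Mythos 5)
Your proposal is correct and is exactly the argument the paper intends: the corollary is stated without proof precisely because it is the immediate composition of Theorem~\ref{thm:main} (which gives $m(G)=3$ under $p\not\equiv\pm 1\bmod 10$, $p\neq 7$) with Theorem~\ref{thesistheorem} (which gives failure of the replacement property when $p\equiv +1\bmod 8$ and $m(G)=3$). Your identification of the role of each hypothesis matches the paper's logic.
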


\begin{question}
What are all the cases for which $G$ satisfies the replacement property when $m(G)=3$?
\end{question}

\section{Elements of Irredundant Generating Sequences, $\iota_n(G)$}

Define $\iota_n(G)$ to be the set of orders of elements which appear in length $n$ generating sequences.  Clearly, for $n>m(G)$, $\iota_n(G)$ is the empty set.  For $G=\mathrm{PSL}(2,p)$, $\iota_1(G)$ is also the empty set as $G$ is not cyclic.   R. Guralnick~\cite{RG} proved a powerful theorem for length 2 generating sets of simple groups\footnote{This proof invokes the classification of finite simple groups.}:

\begin{thm}[3/2 Generation] Given any $x\in G$, there exists $y\in G$ so that $G=\langle x,y\rangle$.  In particular, $r(G)=2$ for $G$ a non-abelian finite simple group.
\end{thm}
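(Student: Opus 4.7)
The plan is to establish the stronger statement that for every non-identity $x \in G$, a positive proportion of $y \in G$ satisfies $\langle x,y\rangle = G$. This implies the existence of a companion $y$, and combined with the observation that a non-abelian simple group cannot be cyclic (hence $r(G) \geq 2$), yields $r(G) = 2$. The case $x = \mathrm{id}$ follows from the fact that every non-abelian finite simple group is $2$-generated, a statement that itself is known only via CFSG.

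The overall approach I would take is probabilistic. For $x \in G$, let $P_x(G)$ denote the proportion of $y \in G$ with $\langle x,y\rangle \neq G$. Since $\langle x,y\rangle$ is proper iff it lies in some maximal subgroup $M$, and this happens iff both $x,y \in M$, a union bound over conjugacy classes of maximal subgroups gives
\[
P_x(G) \leq \sum_{M \in \mathcal{M}_x} \frac{|M|}{|G|} = \sum_{M \in \mathcal{M}_x} \frac{1}{[G:M]},
\]
where $\mathcal{M}_x$ is the collection of maximal subgroups containing $x$. The task reduces to showing this sum is strictly less than $1$ for every non-identity $x$, since any such $y$ outside every maximal $M \supseteq x$ automatically satisfies $\langle x,y\rangle = G$.

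The classification of finite simple groups then enters, splitting the argument into four families: alternating groups $\mathrm{A}_n$ for $n \geq 5$, classical groups of Lie type, exceptional groups of Lie type, and the $26$ sporadic groups. For alternating groups, a companion $y$ can be produced by explicit cycle-structure constructions. For groups of Lie type, the central tool is Aschbacher's theorem organizing maximal subgroups of a classical group into the geometric classes $\mathcal{C}_1, \ldots, \mathcal{C}_8$ together with an almost-simple residual class; for each class I would bound both the number of members containing $x$ and the corresponding indices $[G:M]$, using the identity $|\mathcal{M}_x \cap M^G| = |x^G \cap M| \cdot [G:M] / |x^G|$ to convert the problem into centralizer and fixed-point computations. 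Exceptional Lie-type groups are treated analogously with knowledge of their maximal subgroup structure, and the sporadic groups are handled by direct verification, often computer-assisted.

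The principal obstacle will be the classical-group case. Controlling $|\mathcal{M}_x|$ requires delicate character-theoretic and subgroup-structural estimates, and the trouble comes from elements $x$ (typically unipotent or otherwise highly singular) that simultaneously lie in members of many distinct Aschbacher classes, inflating the sum. Here the naive bound is insufficient and one must exploit interactions between the classes, together with bounds of Liebeck--Seitz type on the sizes of conjugacy classes of maximal subgroups. Once $\sum 1/[G:M]$ is bounded uniformly away from $1$ outside a finite exceptional list of pairs $(G,x)$, the residual small cases are dispatched by direct computation, completing the proof.
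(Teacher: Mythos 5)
The paper does not prove this theorem at all: it is imported verbatim from Guralnick \cite{RG}, with only a footnote remarking that the proof invokes the classification of finite simple groups. So there is no internal proof to compare against, and the relevant question is whether your outline matches how the result is actually established in the literature. It does: the reduction to showing that the proportion of $y$ failing to generate with $x$ satisfies $P_x(G)\leq \sum_{M\in\mathcal{M}_x}1/[G:M]<1$, the regrouping of that sum by conjugacy classes of maximal subgroups via $|\mathcal{M}_x\cap M^G|=|x^G\cap M|\cdot[G:M]/|x^G|$ (i.e.\ into fixed-point ratios on $G/M$), the CFSG case division, and the use of Aschbacher's subgroup structure theorem for the classical groups is precisely the Guralnick--Kantor strategy. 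That said, be clear about what your write-up is: it is a correct roadmap, not a proof. The entire mathematical content lives in the fixed-point-ratio and maximal-subgroup-counting estimates that you name but do not carry out, and those occupy several substantial papers (Liebeck--Saxl, Guralnick--Kantor, and successors); a referee would not accept ``bound each Aschbacher class'' as a step. Two smaller points. First, your claim that the case $x=\mathrm{id}$ ``follows from $2$-generation'' is wrong: $\langle \mathrm{id},y\rangle=\langle y\rangle$ is cyclic and can never equal a non-abelian simple group, so the theorem is simply false for $x=\mathrm{id}$; the correct statement (and the one Guralnick proves) quantifies over $x\neq \mathrm{id}$, and the paper's phrasing is sloppy on this point. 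Second, the assertion that a \emph{positive proportion} of $y$ works is strictly stronger than what is needed and is not known to follow from the union bound alone for every $x$ without the full analytic work, so if you only need existence you should say so and weaken the target accordingly.
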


Therefore, by the $3/2$ Generation Theorem, $\iota_2(G)=\{d|\text{$d$ divides $|G|$ and $d$ is not $1$}\}$, i.e. every non-identity element is in a length $2$ irredundant generating sequence.  Thus, all that remains to be determined is $\iota_3(G)$ and $\iota_4(G)$.  For finite vector spaces $\mathbb{F}_p^n$, $p$ prime, it is clear that all elements of a generating sequence of maximal length (basis) have prime order.  For non-abelian simple groups, this is not necessarily true.  We can see this as a result of the following proposition:

\begin{prop}
For $G=$$\mathrm{PSL}(2,p)$, there is always a length $3$ irredundant generating sequence were all three elements have order $(p-1)/2$.
\end{prop}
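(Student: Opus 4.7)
The plan is to exploit the $2$-transitive action of $G=\mathrm{PSL}(2,p)$ on the projective line $\mathbb{P}^1(\mathbb{F}_p)$. Fix three distinct points $P_1,P_2,P_3\in\mathbb{P}^1(\mathbb{F}_p)$, and let $B_i=\mathrm{Stab}_G(P_i)$, a Borel subgroup isomorphic to $G_p$. The pairwise intersection $T_{ij}=B_i\cap B_j$ is the pointwise stabilizer of $\{P_i,P_j\}$; inside $\mathrm{PSL}(2,p)$ it is the image of the diagonal split torus of $\mathrm{SL}(2,p)$ and is cyclic of order $(p-1)/2$. For $\{i,j,k\}=\{1,2,3\}$, I would let $t_k$ be a generator of $T_{ij}$, so that each $t_k$ has order exactly $(p-1)/2$.

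The irredundancy would then be essentially automatic: each pair $\{t_i,t_j\}$ is contained in the Borel $B_k$ (since $T_{ik},T_{jk}\le B_k$), hence $\langle t_i,t_j\rangle\le B_k\ne G$, and removing any single element from the sequence leaves a proper subgroup. It remains to show that $H:=\langle t_1,t_2,t_3\rangle=G$.

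The key step is to rule out, using Dickson's theorem, every possibility for a maximal subgroup $M$ containing $H$. If $M\cong G_p$, then $M$ fixes a unique point of $\mathbb{P}^1(\mathbb{F}_p)$; but each $t_k$ fixes exactly $\{P_i,P_j\}$, and $\{P_2,P_3\}\cap\{P_1,P_3\}\cap\{P_1,P_2\}=\varnothing$. If $M\cong D_{p-1}$, then $M$ has a unique cyclic subgroup of order $(p-1)/2$, forcing the three tori $T_{ij}$ to coincide, which is impossible because any nontrivial element of $G$ fixes at most two points. If $M\cong D_{p+1}$, then the orders of its elements divide either $(p+1)/2$ or $2$; since $\gcd((p-1)/2,(p+1)/2)=1$, for $p\ge 7$ no element of $M$ can have order $(p-1)/2$. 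Finally, if $M\in\{A_4,S_4,A_5\}$, the orders of its elements are bounded by $5$, which is incompatible with $(p-1)/2$ once $p\ge 13$.

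The main obstacle is therefore the short list of small primes $p\in\{5,7,11\}$ where $(p-1)/2\in\{2,3,5\}$ could a priori occur as an element order inside an exceptional maximal subgroup. For these I would argue directly: when $p=5$ the construction reduces to choosing three involutions pairwise lying inside a Borel $D_{10}\le A_5$, and one checks by hand that the three together generate $A_5$; for $p=7$ and $p=11$ one either chooses $P_1,P_2,P_3$ so that the tori $T_{ij}$ are not jointly contained in a common exceptional maximal subgroup (a finite combinatorial check based on the known incidence of $A_5$ and $S_4$ subgroups with the split tori), or verifies the claim directly in GAP. Once this residual case analysis is complete the proposition follows.
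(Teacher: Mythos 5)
Your proposal is essentially correct, but it reaches the result by a genuinely different route from the paper. The paper writes down explicit matrices $A=\mathrm{diag}(x,x^{-1})$, $B$ lower triangular, $C$ upper triangular of order $p-1$, computes that $AB,AC,BC$ all have trace $2$ and hence order $p$, and deduces that each pair lies in (and in fact equals) a Frobenius subgroup $G_p$; the properness of $\langle b,c\rangle$ is handled by exhibiting $\langle bc\rangle$ as a normal subgroup of order $p$. Your construction is really the same one seen geometrically: the paper's three elements are generators of the split tori fixing the pairs $\{0,\infty\}$, $\{0,q\}$, $\{\infty,q\}$, i.e.\ exactly your $T_{ij}$ for the three points $0,\infty,q$. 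What your viewpoint buys is conceptual clarity (no trace computations, irredundancy is immediate from $T_{ik},T_{jk}\leq B_k$); what it costs you, as written, is the detour through Dickson's theorem to prove generation, which forces the residual analysis at $p\in\{5,7,11\}$ that you defer to GAP. That detour is avoidable: since $T_{ik}\neq T_{jk}$ (a common nontrivial element would fix three points of $\mathbb{P}^1$), the subgroup $\langle t_i,t_j\rangle\leq B_k$ contains two distinct subgroups of order $(p-1)/2$, so its order is a multiple of $(p-1)/2$ strictly larger than $(p-1)/2$ dividing $p(p-1)/2$, forcing $\langle t_i,t_j\rangle=B_k$. As $B_k\cong G_p$ is maximal for all $p$ and $t_k\notin B_k$ (again by the three-fixed-points argument), generation follows uniformly for every $p\geq 5$ with no case analysis and no appeal to the classification of maximal subgroups beyond the maximality of $G_p$. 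With that one observation added, your argument is complete and arguably cleaner than the paper's.
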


\begin{proof}
Let $\pi: \mathrm{SL}(2,p)\rightarrow G$ be the canonical projection.  Let $a,b,c$ be elements of $G$ and $A,B,C$ be lifts to matrices in $\mathrm{SL}(2,p)$.  Define:

\begin{align}
A=\begin{pmatrix}x&0\cr 0&\frac{1}{x}\end{pmatrix}\hspace{5mm}B=\begin{pmatrix}\frac{1}{x}&0\cr x&x\end{pmatrix}\hspace{5mm}C=\begin{pmatrix}\frac{1}{x}&y\cr 0&x\end{pmatrix},
\end{align}

where $x\in \mathbb{F}_p^*$ and $y=-x+2/x-1/x^3$.  Note that $A,B$ and $C$ have order $p-1$ and so $a,b$ and $c$ have order $(p-1)/2$.  We claim that $a,b,c$ is the sequence we seek.   First, we note that

\begin{align}
AB=\begin{pmatrix}1&0\cr 1&1\end{pmatrix}\hspace{5mm}AC=\begin{pmatrix}1&xy\cr 0&1\end{pmatrix}\hspace{5mm}BC=\begin{pmatrix}\frac{1}{x^2}&\frac{y}{x}\cr 1&xy+x^2\end{pmatrix},
\end{align}

which all have trace $2$ and thus have order $p$.  It is clear that $A$ is not in $\langle B\rangle \cup\langle C\rangle$, $B$ is not in $\langle A\rangle \cup\langle C\rangle$ and $C$ is not in $\langle A\rangle \cup\langle B\rangle$, since $A$ is diagonal, $B$ is upper triangular and $C$ is lower triangular.  Furthermore, it is clear that $\langle a,b\rangle$, $\langle a,c\rangle$ are not all of $G$ because there will always be a zero in the upper right (lower left) position.  Since each of these groups contain an element of order $p$ and one of order $(p-1)/2$, they are contained in an $H_i$ and thus must exactly generate the $H_i$.  All that remains to show is that $\langle b,c\rangle$ is not all of $G$.   To do this, we will observe that $\langle bc\rangle\unlhd\langle b,c\rangle$.  This will give us the desired result, since $G$ is simple and so has no normal subgroups (and $bc$ has order $p$, so is a proper nontrivial subgroup).  First, a simple computation shows that

\begin{align}
(BC)^n=\begin{pmatrix}\frac{n-(n-1)x^2}{x^2}&\frac{-n(-2x^2+1+x^4)}{x^4}\cr n&\frac{-n+(n+1)x^2}{x^2}\end{pmatrix}.
\end{align}

In order to show that $BC$ is normal, we need to show that conjugating by $B,B^{-1},C,C^{-1}$ takes $BC$ to another power of $BC$.  Given $(BC)^n$ as above, $(BC)^{x^2}=CB$, which means that $CB=C(BC)C^{-1}\in \langle BC\rangle$ and similarly $CB=B^{-1}(BC)B\in \langle BC\rangle$.  Finally, note that

\begin{align}
(BC)^{x^{-2}}=B(CB)B^{-1}=C^{-1}(BC)C.
\end{align}

Therefore, $\langle BC\rangle$ is normal in $\langle B,C\rangle$ since any power of $BC$ conjugates to another power of $BC$ by the generators of $\langle B,C\rangle$.

\end{proof}

\begin{cor}
The elements of length $m(G)$ irredundant generating sequences of $G$ need not have prime order if $G$ is not solvable.  
\end{cor}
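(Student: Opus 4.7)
The plan is to derive the corollary as a direct application of the preceding proposition to a judiciously chosen prime $p$. That proposition exhibits, for every odd prime $p$, a length-three irredundant generating sequence of $\mathrm{PSL}(2,p)$ in which all three elements have order $(p-1)/2$. To promote this to a length-$m(G)$ sequence with non-prime element order, I need $p$ to satisfy both $m(\mathrm{PSL}(2,p)) = 3$ and $(p-1)/2$ composite.

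Concretely, I would take $p = 13$. Here $(p-1)/2 = 6$ is composite. Since $13 \not\equiv \pm 1 \pmod{10}$ and $13 \neq 7$, Theorem~\ref{thm:main} gives $m(\mathrm{PSL}(2,13)) = 3$. Applying the preceding proposition then furnishes an irredundant generating sequence of $\mathrm{PSL}(2,13)$ of length $3 = m(\mathrm{PSL}(2,13))$ in which each of the three elements has order $6$. As $\mathrm{PSL}(2,13)$ is a non-abelian finite simple group, it is not solvable, and the corollary follows.

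No serious obstacle arises; the heavy lifting was done in the preceding proposition, and only a choice of prime remains. I would close by remarking that the phenomenon is persistent rather than accidental: for any prime $p \equiv 1 \pmod{4}$ with $p > 5$, the quantity $(p-1)/2$ is even and at least $4$, hence composite, and infinitely many such primes also satisfy $p \not\equiv \pm 1 \pmod{10}$ by Dirichlet's theorem, producing an infinite family of non-solvable groups illustrating the corollary.
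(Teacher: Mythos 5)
Your proposal is correct and matches the paper's own proof, which likewise takes $p=13$, notes $m(\mathrm{PSL}(2,13))=3$, and invokes the preceding proposition to produce a length-three irredundant generating sequence whose elements all have order $6$. Your extra justification of $m(\mathrm{PSL}(2,13))=3$ via Theorem~\ref{thm:main} and the closing remark on the infinitude of such primes are harmless additions to the same argument.
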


\begin{proof}
For $G=\mathrm{PSL(2,13)}$, $m(G)=3$.  By Proposition 22, there exists a length 3 irredundant generating sequence such that all the elements have order $6$.
\end{proof}

\begin{cor}
\label{cor:p-1}
Every divisor of $(p-1)/2$ is in $\iota_3(G)$.
\end{cor}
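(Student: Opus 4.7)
The plan is to re-run the construction from Proposition 22 with the parameter $x$ chosen to have smaller order in $\mathbb{F}_p^*$. Given a divisor $d \geq 2$ of $(p-1)/2$, the condition $2d \mid p-1$ lets us pick $x \in \mathbb{F}_p^*$ of order exactly $2d$; the case $d=1$ is vacuous since the identity cannot appear in an irredundant sequence. With this choice, I would first check that the matrices $A, B, C$ from Proposition 22 project to elements $a, b, c$ of order exactly $d$ in $G$: each matrix has eigenvalues $x, x^{-1}$ and hence order $2d$ in $\mathrm{SL}(2,p)$, while $x^d = -1$ places $-I$ inside $\langle A\rangle, \langle B\rangle, \langle C\rangle$, so the order is halved upon passing to $\mathrm{PSL}(2,p)$.

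Next I would observe that every structural calculation in the proof of Proposition 22 is a symbolic identity in $x$ and $y$ that is independent of the order of $x$: the trace-$2$ computations showing $ab, ac, bc$ each have order $p$; the triangularity that places $\langle a,b\rangle$ and $\langle a,c\rangle$ inside Borel subgroups; and the matrix identity $(BC)^{x^2} = CB$, which gives $\langle bc\rangle \unlhd \langle b,c\rangle$ and so puts $\langle b,c\rangle$ inside the normalizer of a Sylow $p$-subgroup, again a Borel. Consequently all three pairwise subgroups remain proper, and the triple $(a,b,c)$ will be irredundant as soon as it generates.

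It remains to show $\langle a, b, c\rangle = G$. Since this subgroup contains $ab$ of order $p$, Dickson's theorem forces it either to equal $G$ or to lie in a Borel, and the latter would require $A, B, C$ to share a common fixed point on $\mathbb{P}^1(\mathbb{F}_p)$. A quick inspection rules this out: $A$ fixes $[1:0]$ and $[0:1]$, $B$ fixes $[0:1]$ but not $[1:0]$, and $C$ fixes $[1:0]$ but sends $[0:1]$ to $[y:x]$; since $x^3 y = -(x^2-1)^2$ vanishes only when $x^2 = 1$ (i.e.\ when $d=1$), we have $y \neq 0$ for $d \geq 2$, so no common fixed point exists. Thus $(a,b,c)$ is an irredundant generating triple consisting of elements of order $d$, witnessing $d \in \iota_3(G)$.

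The main subtlety I anticipate is the projectivization step---verifying $x^d = -1$ so that $-I \in \langle A\rangle$ (and likewise for $B, C$) and the images in $\mathrm{PSL}(2,p)$ have order $d$ rather than $2d$. The rest of the argument is a straightforward re-run of Proposition 22 with $x$ of order $2d$ in place of $x$ of order $p-1$.
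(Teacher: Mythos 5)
Your proof is correct, but it takes a genuinely different route from the paper. The paper keeps the sequence of Proposition 22 exactly as constructed (with $x$ of order $p-1$) and then invokes the replacement machinery of Proposition~\ref{important}: since the three pairwise subgroups $\langle g_i,g_j\rangle$ are distinct Borel-type subgroups with trivial common intersection, any nontrivial $x\in\langle g_1\rangle$ --- hence of any order dividing $(p-1)/2$ --- can replace $g_1$, and the resulting triple is still irredundant because it corresponds to the same family of maximal subgroups in general position. You instead re-run the entire construction with the parameter $x$ chosen of order $2d$, checking that every step (the order computation via eigenvalues and $x^d=-I$, the trace-$2$ identities, the relation $(BC)^{x^2}=CB$, and the nonvanishing $x^3y=-(x^2-1)^2$) is symbolic in $x$; you then supply your own generation argument via the absence of a common fixed point on $\mathbb{P}^1(\mathbb{F}_p)$, using that a subgroup containing an element of order $p$ is either $G$ or lies in a point stabilizer. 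Your version is more self-contained (it does not rely on the replacement property, nor on the implicit fact that three pairwise-distinct Borels intersect trivially) and yields the stronger conclusion that \emph{all three} elements of the witnessing triple have order $d$, at the cost of redoing the verification; the paper's version is shorter and changes only one element of the fixed sequence. Both correctly set aside $d=1$, which cannot occur in an irredundant sequence.
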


\begin{proof}
Let $g_1,g_2,g_3$ be a length three irredundant generating set as in the proposition.  Take any $x\in\langle g_1\rangle$ (i.e. an element whose order divides the order of $g_1$, which is $(p-1)/2$).  Since the intersection of all the subgroups $\langle g_i,g_j\rangle$ is trivial, this sequence satisfies the replacement property by Prop.~\ref{important}.  Therefore, $x$ can replace one of the $g_i$ to arrive at a new generating sequence.  Clearly, it can only replace $g_1$.  This new generating set $x,g_2,g_3$ is still irredundant because the set of maximal subgroups in general position associated to the set is the same as it was for the original set of the $g_i$.  
\end{proof}

Now, we consider the elements with order dividing $p+1$ or equal to $p$.  To proceed, we will need the following lemma:

\begin{lemma}
\label{unique}
Let $x\in G=\mathrm{PSL}(2,p)$, $p>5$.  If $x$ has order $p$ or order $>5$ dividing $p+1$, then there is a unique maximal subgroup of $G$ containing $x$.
\end{lemma}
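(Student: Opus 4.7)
The plan is to use Dickson's theorem to pin down the isomorphism type of any maximal subgroup $M$ containing $x$, and then to show that any such $M$ must equal $N_G(\langle x\rangle)$, which depends only on $x$ and not on the choice of $M$.

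For the first step, let $M$ be any maximal subgroup of $G$ containing $x$. Suppose $x$ has order $p$. The exceptional maximal subgroups $\mathrm{A}_4,\mathrm{S}_4,\mathrm{A}_5$ have orders $12,24,60$, all coprime to $p$ when $p>5$; the dihedral subgroups $\mathrm{D}_{p\pm 1}$ have orders coprime to $p$. By Lagrange, $M\cong G_p$, and $\langle x\rangle$ is the unique (normal) Sylow $p$-subgroup of $G_p$. Now suppose instead that $x$ has order $k>5$ with $k\mid p+1$. The exceptional subgroups have maximum element order at most $5$, so they are immediately ruled out. An element of $G_p$ has order dividing either $p$ or $(p-1)/2$; since $\gcd(k,p)=1$ and $\gcd(k,p-1)$ divides $\gcd(p+1,p-1)$, which divides $2<k$, the subgroup $G_p$ is also excluded. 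A similar computation rules out $\mathrm{D}_{p-1}$: since $k>2$, the element $x$ would have to lie in the cyclic subgroup of order $(p-1)/2$, but $k\nmid (p-1)/2$. Hence $M\cong \mathrm{D}_{p+1}$, and since $k>2$ again, $x$ lies in the cyclic subgroup $C$ of index $2$.

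For the second step, in both cases we have a cyclic subgroup $C\trianglelefteq M$ with $\langle x\rangle\leq C$ (either the normal $\mathbb{Z}_p$ inside $G_p$, or the cyclic subgroup of order $(p+1)/2$ inside $\mathrm{D}_{p+1}$). Since $C$ is cyclic, $\langle x\rangle$ is characteristic in $C$, and a characteristic subgroup of a normal subgroup is normal, so $\langle x\rangle\trianglelefteq M$, i.e.\ $M\leq N_G(\langle x\rangle)$. Because $G$ is simple (recall $p>5$) and $\langle x\rangle$ is a proper nontrivial subgroup, $N_G(\langle x\rangle)\neq G$. By maximality of $M$, this forces $M=N_G(\langle x\rangle)$. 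Since the right-hand side depends only on $x$, any two maximal subgroups of $G$ containing $x$ must coincide.

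The main hurdle is the order-ruling in the second case: one needs to combine $k\mid p+1$, $k>5$, and $\gcd(p+1,p-1)\mid 2$ carefully to eliminate $G_p$ and $\mathrm{D}_{p-1}$. Once the isomorphism type of $M$ has been identified, the normalizer argument is short and uniform across both cases.
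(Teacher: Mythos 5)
Your proposal is correct and follows essentially the same route as the paper: show that $\langle x\rangle$ is normal in any maximal subgroup $M$ containing it, so $M\leq N_G(\langle x\rangle)\neq G$ by simplicity, and maximality forces $M=N_G(\langle x\rangle)$, which depends only on $x$. You are somewhat more explicit than the paper in using Dickson's theorem to rule out the other maximal subgroup types in the order-dividing-$p+1$ case, a step the paper leaves largely implicit.
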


\begin{proof}
First, we note that a dihedral group $D$ of order $2n=p+1$ has a unique cyclic subgroup of order $q$ for every $q>2$ which divides $n$.   This subgroup will be contained in the index two characteristic cyclic subgroup of $D$.  Therefore, the cyclic subgroup $Q$ of order $q$ is normal in $D$.  Since $G$ is simple, $N_G(Q)\neq G$.  Suppose that $Q$ is contained in some maximal subgroup $M\geq N_G(Q)$.  Note that $N_D(Q)=D$ since $Q$ is normal in $D$.  Therefore, $D\leq N_G(D)$, but $D$ is maximal in $G$, so $N_G(D)=D$, i.e. $M=D$; there is a unique maximal subgroup of $G$ which contains $Q$, namely $N_G(Q)$.\\
\\
Now, suppose that $x$ has order $p$.  Since $p>5$, the only maximal subgroup which can contain $\langle x\rangle$ is one isomorphic to $\mathbb{Z}_p\rtimes\mathbb{Z}_{(p-1)/2}$.  However, $\langle x\rangle$ is normal in such a subgroup.  Therefore, the same argument as above applies; there is a unique maximal subgroup which contains $\langle x\rangle$.
\end{proof}

\begin{cor}
\label{cor:p+1}
If $x\in\mathrm{PSL}(2,p)$ has order $>5$ that divides $p+1$ or has order $p$, then $x$ is not in a length $3$ irredundant generating sequence.
\end{cor}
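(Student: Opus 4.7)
The plan is to derive an immediate contradiction from Lemma~\ref{unique} together with the maximal-subgroup formulation of irredundant generating sequences introduced in Section~2.

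Suppose for contradiction that $x$ lies in a length three irredundant generating sequence of $G$; without loss of generality write this sequence as $(g_1,g_2,g_3)$ with $g_1=x$. By irredundancy, the two subgroups $\langle g_1,g_2\rangle$ and $\langle g_1,g_3\rangle$ are both proper in $G$, so each is contained in some maximal subgroup, say $M_3\supseteq \langle g_1,g_2\rangle$ and $M_2\supseteq \langle g_1,g_3\rangle$ (matching the labeling conventions used just before Proposition~\ref{prop:twoS}). Both $M_2$ and $M_3$ then contain $x=g_1$.

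Now I invoke Lemma~\ref{unique}: since $x$ has order $p$ or has order $>5$ dividing $p+1$, there is a unique maximal subgroup of $G$ containing $x$. Hence $M_2=M_3$, and this common maximal subgroup contains $g_1$, $g_2$ and $g_3$. That forces $G=\langle g_1,g_2,g_3\rangle\leq M_2$, contradicting the fact that $M_2$ is a proper subgroup of $G$. This contradiction proves the corollary.

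The argument has no real obstacle left: all the work has already been done in Lemma~\ref{unique}, which pinned down the uniqueness of the maximal overgroup via normalizer considerations in the dihedral and Frobenius cases. The only point that deserves a sentence of justification in the write-up is that such maximal overgroups $M_2$ and $M_3$ exist at all, which is immediate from irredundancy (so that each two-element subsequence fails to generate).
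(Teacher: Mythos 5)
Your proposal is correct and follows essentially the same route as the paper: both invoke Lemma~\ref{unique} to conclude that the two maximal subgroups containing $x$ arising from the proper subgroups $\langle g_1,g_2\rangle$ and $\langle g_1,g_3\rangle$ must coincide, yielding a contradiction. The only cosmetic difference is that the paper phrases the contradiction as a failure of general position, while you observe directly that the common maximal subgroup would contain all three generators and hence all of $G$; these are equivalent.
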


\begin{proof}
Suppose that $\langle g_1,g_2,g_3\rangle$ is an irredundant generating sequence of length $3$ and suppose that $g_1$ has order which divides $p+1$ or has order $p$.  Then, there is a unique maximal subgroup which contains $g_1$.  Let $M_1,M_2,M_3$ be the associated set of maximal subgroups in general position.  Since there is a unique maximal subgroup which contains $g_1$, $M_2=M_3$, which contradicts the fact that they are in general position.  Thus, the sequence of the $g_i$ cannot be irredundant.  
\end{proof}

Note that because of the factor of $2$ in Cor.~\ref{cor:p-1}, we do not necessarily know if $2\in\iota_3(G)$.  From our discussion introducing the replacement property, we observed that $G$ always has an irredundant generating sequence of elements all of order $2$ which has length at least $3$.  If $m(G)=3$, then we immediately get that $2\in\iota_3(G)$.  If $m(G)=4$, then the irredundant generating sequence from the introduction may have length $4$ and then we need additional information to determine if $2\in\iota_3(g)$.  We need a lemma for when $m(G)=4$:

\begin{lemma}
Let $m(G)=4$.  Then, $2\in\iota_3(G)$.
\end{lemma}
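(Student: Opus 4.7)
The plan is to construct an explicit length-three irredundant generating sequence of $G$ in which all three elements are involutions, which will give $2 \in \iota_3(G)$ directly. The construction uses a maximal dihedral subgroup $\mathrm{D}_{p+1}$ provided by Dickson's Theorem, together with an involution chosen outside it.

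First I would invoke Dickson's Theorem to fix a dihedral subgroup $D \cong \mathrm{D}_{p+1}$ of $G$. For the $m(G)=4$ primes other than $p=7$, a straightforward order comparison shows $D$ cannot sit inside any $\mathrm{S}_4$ or $\mathrm{A}_5$ subgroup of $G$, so $D$ is maximal. Writing $D = \langle r, s \rangle$ with $r$ of order $(p+1)/2$ and $s$ an involution satisfying $srs = r^{-1}$, I set $a = s$ and $b = rs$. Then $a, b$ are involutions and $ab = r^{-1}$ generates the cyclic subgroup of index two in $D$, so $\langle a, b \rangle = D$.

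Next I would pick any involution $c \in G \setminus D$. Such a $c$ exists by a count: the dihedral $D$ contains at most $(p+3)/2$ involutions, while $G$ has a conjugacy class of involutions of size on the order of $p^2/2$. With these choices the triple $(a, b, c)$ is length-three irredundant generating: $\langle a, b \rangle = D$ is a proper maximal subgroup, so $c \notin D$ forces $\langle a, b, c \rangle = G$, while each of $\langle a, c \rangle$ and $\langle b, c \rangle$ is generated by two involutions, hence dihedral and proper in the non-dihedral simple group $G$.

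The main obstacle is the case $p = 7$, in which $\mathrm{D}_8$ is a proper subgroup of each maximal $\mathrm{S}_4 \leq \mathrm{PSL}(2,7)$ and therefore is not itself maximal. Here I would take $a, b$ to generate a Sylow $2$-subgroup $\mathrm{D}_8$ of some maximal $\mathrm{S}_4$, and take $c$ to be an involution of $\mathrm{PSL}(2,7)$ lying in neither of the two maximal $\mathrm{S}_4$'s that contain this $\mathrm{D}_8$; a direct count shows $8$ of the $21$ involutions of $\mathrm{PSL}(2,7)$ lie outside that union, so such a $c$ exists. Since the only maximal subgroups of $\mathrm{PSL}(2,7)$ that contain involutions are its copies of $\mathrm{S}_4$, the triple $(a, b, c)$ again satisfies $\langle a, b, c \rangle = G$, and irredundancy follows as before.
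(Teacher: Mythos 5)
Your argument is correct, but it takes a genuinely different route from the paper's. The paper starts from an irredundant generating sequence consisting entirely of involutions (which exists and has length at least $3$), supposes it has length $4$, and then invokes the Whiston--Saxl structure of such sequences together with GAP computations on the length-three irredundant sequences of $\mathrm{S}_4$ and $\mathrm{A}_5$ to find a pair $g_i,g_j$ whose product $h=g_ig_j$ has order $3$, $4$ or $5$; since such an $h$ is forbidden in a length-four irredundant sequence by Prop.~\ref{prop:orders} and Cor.~\ref{allorder2}, the sequence $(h,g_3,g_4)$ is forced to be a length-three irredundant generating sequence still containing involutions. You instead construct the length-three sequence directly: two involutions generating a maximal dihedral subgroup $D\cong\mathrm{D}_{p+1}$, plus an involution outside $D$, with irredundance coming for free because any two involutions generate a proper (dihedral) subgroup of the simple group $G$. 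Your approach is more elementary (no computer calculation), proves the stronger statement that there is a length-three irredundant generating sequence consisting \emph{entirely} of involutions, and in fact never uses the hypothesis $m(G)=4$: it applies to every $p\geq 11$, and your separate $\mathrm{S}_4$ analysis handles $p=7$. Two small points to tighten. First, to conclude that $D$ is maximal you must rule out containment not only in $\mathrm{S}_4$ and $\mathrm{A}_5$ but also in $\mathrm{D}_{p-1}$, $G_p$ and $\mathrm{A}_4$; all of these are excluded at once because $D$ contains an element of order $(p+1)/2\geq 6$, and $(p+1)/2$ is coprime to $(p-1)/2$ and is not $p$. Second, the existence of an involution outside $D$ needs no counting: as observed in Section 2, the involutions generate $G$, so they cannot all lie in the proper subgroup $D$.
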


\begin{proof}
From the introduction to the replacement property, we know that there exists an irredundant generating sequence with at least $3$ elements all of which have order $2$.  If the length of one such sequence is $3$, then we are done. Instead suppose that we have a length four irredundant generating sequence $g_1,g_2,g_3,g_4\in G$ with $g_i^2=1$.  Without loss of generality, by Whiston and Saxl, we know that at least two of the corresponding maximal subgroups in general position will be isomorphic to $\mathrm{S}_4$ or $\mathrm{A}_5$.  There are two cases:

\begin{enumerate}
\item One of the maximal subgroups is isomorphic to $\mathrm{A}_5$.  Without loss of generality, assume that $\langle g_1,g_2,g_3\rangle\cong \mathrm{A}_5$.  A calculation with GAP shows that all length three irredundant generating sequences of $\mathrm{A}_5$ composed of elements of order 2 have at least one pair whose product has order 5.  Without loss of generality, suppose that the order of $h=g_1g_2$ is $5$.  The sequence $h,g_2,g_3,g_4$ is clearly still a generating sequence of $G$.   However, by Prop. 10, this sequence cannot be irredundant.  By the irredundancy of the original sequence, $(g_2,g_3,g_4)$, $(h,g_2,g_4)$, and $(h,g_2,g_3)$ cannot generate $G$.  Therefore, we must have that $h,g_3,g_4$ generates $G$.  Since $(g_3,g_4)$ and $(h,g_3)$ do not generate $G$, all we need to check in order to prove irredundancy of $h,g_3,g_4$ is that $h$ and $g_4$ do not generate $G$.  If $\langle h,g_4\rangle = G$, then $g_3\in \langle h,g_4\rangle\leq \langle g_1,g_2,g_4\rangle$, which contradicts the irredundancy of the original sequence.
\item None of the maximal subgroups are isomorphic to $\mathrm{A}_5$.  Assume that $\langle g_1,g_2,g_3\rangle\cong \mathrm{S}_4$.  A GAP calculation shows that for all length three irredundant generating sequences composed of elements of order 2, either there is a pair whose product has order 4 or there is a pair whose product has order 3.  In either case, the same logic from the $\mathrm{A}_5$ case applies via Cor. 11 which works even though we have made no assumption about $p$ since there is no $\mathrm{A}_5$ and thus the case is equivalent to $p\not\equiv\pm 1\mod 10$.
\end{enumerate}

\end{proof}

The results about $2\in\iota_3(G)$ mean that Cor.~\ref{cor:p-1} extends to every divisor of $p-1$.   Now, we are ready to summarize what we know about $\iota_n(G)$ using the above results and the properties from Dickson's Theorem:


\begin{thm}
Let $G=\mathrm{PSL}(2,p)$.  Then, for $n>4$, $\iota_1(G)=\iota_n(G)=\emptyset$.  In addition, $\iota_2(G)=\{d|\text{$d$ divides $|G|$ and $d$ is not $1$}\}$.  Furthermore, $\{d|\text{$d$ divides $p-1$}\}=\iota_3(G)$ and $\iota_4=\emptyset$ with the following exceptions:

\begin{enumerate}
\item $p\equiv -1\mod 10$.  Then, $5$ may also be in $\iota_3(G)$ and $\iota_4(G)\subseteq\{2,3\}$.
\item $p\equiv -1\mod 8$.  Then, $4$ may be in $\iota_3(G)$.  If $p=7$ then $\iota_4(G)=\{2\}$. 
\item $p\equiv -1\mod 3$ and $\left( p\equiv 3,13,27,37\mod 40,p\equiv\pm 1\mod 8\text{ or }p\equiv\pm 1\mod 10\right)$.  Then, $3$ may also be in $\iota_3(G)$.
\end{enumerate}

\end{thm}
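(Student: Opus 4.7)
The plan is to handle each value of $n$ in turn, leveraging the machinery already assembled. For $n=1$, since $G=\mathrm{PSL}(2,p)$ is non-cyclic, no single element generates $G$, so $\iota_1(G)=\emptyset$. For $n>4$, the Whiston--Saxl upper bound $m(G)\leq 4$ forces $\iota_n(G)=\emptyset$. For $n=2$, I would apply the $3/2$-Generation Theorem stated above: any non-identity $x$ admits a $y$ with $\langle x,y\rangle=G$, and since $G$ is not cyclic the pair $(x,y)$ is automatically irredundant. Since every $d\mid|G|$ with $d\neq 1$ is the order of some element, this gives the claimed description of $\iota_2(G)$.

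For $\iota_3(G)$, the positive direction comes in two steps. Corollary~\ref{cor:p-1}, applied to the proposition constructing a length-three irredundant generating sequence in which each element has order $(p-1)/2$, realizes every divisor of $(p-1)/2$ in $\iota_3(G)$. The remaining divisor $d=2$ is supplied by the introductory observation that $G$ admits a length-at-least-three irredundant sequence of involutions, together with the lemma immediately preceding the theorem: when $m(G)=3$ the introductory sequence already has length exactly three, and when $m(G)=4$ the lemma explicitly shortens a length-four involution sequence to a length-three one. For the reverse direction, I argue by contradiction. Any element of $G$ has order dividing $p$, $(p-1)/2$, or $(p+1)/2$. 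Corollary~\ref{cor:p+1} excludes order $p$ and any order exceeding $5$ that divides $p+1$; the remaining candidates not already divisors of $(p-1)/2$ are the small values $d\in\{3,4,5\}$ dividing $(p+1)/2$, which furnish precisely the exceptional cases.

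To match each small order to its exceptional congruence I invoke Dickson's theorem. Order $5$ appears outside the split torus exactly when $5\mid p+1$ but $5\nmid p-1$, i.e.\ $p\equiv -1\mod 10$; this coincides with $\mathrm{A}_5$ being maximal in $G$ and supplies exception~(1). Order $4$ appears beyond divisors of $(p-1)/2$ precisely when $8\mid p+1$ but $8\nmid p-1$, i.e.\ $p\equiv -1\mod 8$, yielding exception~(2). Order $3$ appears outside the divisors of $(p-1)/2$ when $3\mid p+1$ (i.e.\ $p\equiv -1\mod 3$), and for it to sit in a length-three irredundant sequence an exceptional maximal subgroup containing an element of order $3$ must exist; Dickson's theorem then produces the three disjunctive conditions of exception~(3). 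The word ``may'' in each exception reflects that the congruence is necessary but not automatically sufficient: concrete membership requires exhibiting an irredundant triple, typically by an explicit construction inside the appropriate exceptional subgroup.

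For $\iota_4(G)$, Theorem~\ref{thm:main} directly yields $\iota_4(G)=\emptyset$ whenever $p\not\equiv\pm 1\mod 10$ and $p\neq 7$. In the remaining range $p\equiv\pm 1\mod 10$, Proposition~\ref{prop:orders} confines every element of a length-four irredundant sequence to order $2$ or $3$, which is exception~(1)'s bound $\iota_4(G)\subseteq\{2,3\}$. For the special prime $p=7$ (which is $\equiv -1\mod 8$ and $\not\equiv\pm 1\mod 10$), Corollary~\ref{allorder2} applies and forces every element of a maximal-length sequence to be an involution; combined with the Whiston--Saxl explicit length-four sequence in $\mathrm{PSL}(2,7)$, this gives $\iota_4(\mathrm{PSL}(2,7))=\{2\}$. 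The main obstacle throughout is the $\iota_3$ analysis: existence of an element of a given small order in $G$ does not automatically produce an irredundant triple containing it, so each of the orders $3$, $4$, $5$ requires a separate subgroup-lattice check to decide whether the candidate triple is actually irredundant, which is precisely why the theorem states these inclusions with ``may'' rather than equality.
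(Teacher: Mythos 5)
Your proposal is correct and follows essentially the same route as the paper, which presents this theorem as a summary assembled from the preceding results: non-cyclicity for $\iota_1$, the Whiston--Saxl bound for $n>4$, the $3/2$-Generation Theorem for $\iota_2$, Corollary~\ref{cor:p-1} plus the $2\in\iota_3(G)$ lemma for the positive half of $\iota_3$, Corollary~\ref{cor:p+1} and Dickson's Theorem for the exclusions and the exceptional congruences, and Theorem~\ref{thm:main}, Proposition~\ref{prop:orders}, and Corollary~\ref{allorder2} for $\iota_4$. Your writeup is in fact slightly more explicit than the paper's (e.g.\ in matching the small orders $3,4,5$ to their congruence conditions), but it introduces no new ideas beyond the paper's intended argument.
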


\begin{question}
What are $\iota_3(G)\cap\{3,4,5\}$ and $\iota_4(G)$ in general?
\end{question}

Answering this may require discovering new methods.  For example, the proof may be achievable with a variation on Hall's 1936 paper~\cite{Hall} which gives the lattice of subgroups, Moebius function, and a formula for $\phi_n(G)$.  Without additional information, the following conjecture about $\iota_4(G)$ might be true:

\begin{conjecture}
For $G=\mathrm{PSL}(2,p)$, $\iota_4(G)=\emptyset$ unless $p=7,11,19,$ or $31$.  In these exceptional cases, $\iota_4(G)=\{2\}$ unless $p=11$ in which case $\iota_4(G)=\{2,3\}$.
\end{conjecture}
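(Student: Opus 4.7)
The conjecture splits into two independent tasks: (i) show $m(G) = 3$ for every prime $p \equiv \pm 1 \pmod{10}$ outside $\{11, 19, 31\}$, so that $\iota_4(G) = \emptyset$ for all $p \notin \{7, 11, 19, 31\}$; and (ii) pin down $\iota_4(G)$ for each of the four exceptional primes. For $p \not\equiv \pm 1 \pmod{10}$ and $p \neq 7$ the vanishing $\iota_4(G) = \emptyset$ is already handed to us by Theorem~\ref{thm:main}, and for $p = 7$ the equality $\iota_4(G) = \{2\}$ follows immediately from Cor.~\ref{allorder2} combined with the fact that $m(\mathrm{PSL}(2,7)) = 4$.

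For task (i), the plan is to extend the Todd--Coxeter ``gluing'' strategy of Section~3 so that $\mathrm{A}_5$ is treated on the same footing as $\mathrm{S}_4$. First, enumerate in GAP every length three irredundant generating sequence of $\mathrm{A}_5$ whose elements have orders in $\{2,3\}$ (the only options by Prop.~\ref{prop:orders}), and extract a finite defining-relation set $R_s$ from each. Then analyze the amalgamated group $Q$ of Eq.~\ref{biggroup} by splitting on the isomorphism types of $(M_3, M_4)$: two $\mathrm{A}_5$'s, one $\mathrm{A}_5$ and one $\mathrm{S}_4$, one $\mathrm{A}_5$ and one dihedral, and the already-handled cases. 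Each new case needs intersection lemmas playing the role of Lemmas~19 and~20: one uses that $\mathrm{A}_5$ has a unique Sylow $5$-normalizer containing each order-$5$ element (bounding how many $g_ig_j$ can have order $5$) and that the only proper subgroups of $\mathrm{A}_5$ are $\mathbb{Z}_2^2$, $\mathbb{Z}_3$, $\mathbb{Z}_5$, $\mathrm{S}_3$, $\mathrm{D}_{10}$, $\mathrm{A}_4$, together with the analogue of Lemma~\ref{funnysubgroup} forbidding $G_p$ on the dihedral side. After imposing every resulting relation, one hopes that most configurations either yield $|Q|$ too small to admit $\mathrm{PSL}(2,p)$ as a quotient for $p$ above a small threshold, or present $Q$ as a quotient of a Coxeter group whose only simple composition factors are among $\mathrm{PSL}(2,11)$, $\mathrm{PSL}(2,19)$, $\mathrm{PSL}(2,31)$.

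For task (ii), once (i) is in place the exceptional primes are finite in number and finite in size, so I would compute $\iota_4(G)$ directly in GAP for $p \in \{11, 19, 31\}$ by enumerating length four irredundant generating sequences and reading off the multiset of orders. Prop.~\ref{prop:orders} already restricts those orders to $\{2,3\}$, so the content of the verification is narrow: for $p = 11$ exhibit an irredundant sequence containing an element of order $3$ (conjecturally present) and confirm both orders occur; for $p = 19$ and $p = 31$ confirm no length four irredundant sequence contains an order $3$ element.

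The hard part is task (i), specifically the $\mathrm{A}_5$--$\mathrm{A}_5$ and $\mathrm{A}_5$--dihedral subcases. The $\mathrm{S}_4$ analysis succeeds because $\mathrm{S}_4$ has the nontrivial normal subgroups $\mathrm{A}_4$ and $V_4$ inside it, giving the clean normalizer-uniqueness used in Cor.~\ref{allorder2}; $\mathrm{A}_5$ is simple, so two maximal $\mathrm{A}_5$'s inside $\mathrm{PSL}(2,p)$ can a priori meet in $\mathrm{A}_4$, $\mathrm{D}_{10}$, or $\mathrm{S}_3$, and the resulting Coxeter-type diagrams for $Q$ will not all be affine or solvable. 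Proving that each such $Q$ with a non-abelian simple quotient forces that quotient to be one of $\mathrm{PSL}(2,11)$, $\mathrm{PSL}(2,19)$, or $\mathrm{PSL}(2,31)$ is exactly the step the authors explicitly flag as not easily extended, and is where any proof of the conjecture will stand or fall.
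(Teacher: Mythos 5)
This statement is stated in the paper as a \emph{conjecture}, explicitly left open; the paper offers no proof, only computational evidence (verification of $m(G)$ for primes up to $300$ and the data in Tables~1 and~2). Your proposal does not close the gap: it is a research plan whose decisive step is left as a hope rather than an argument. Task~(ii) and the $p\not\equiv\pm 1\bmod 10$ part of task~(i) are indeed already settled by Theorem~\ref{thm:main}, Cor.~\ref{allorder2}, and the finite computations reported in the tables, so the entire content of the conjecture is your task~(i) for $p\equiv\pm 1\bmod 10$, $p\notin\{11,19,31\}$ --- and there your text says ``one hopes that most configurations either yield $|Q|$ too small \dots or present $Q$ as a quotient of a Coxeter group'' with suitable composition factors. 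That is precisely the step that must be proved, and you concede in your final paragraph that it is where the proof ``will stand or fall.''

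Beyond being incomplete, there is a concrete reason the $\mathrm{S}_4$ machinery does not transfer as described. The reduction to solvable Coxeter groups in Section~3 hinges on Cor.~\ref{allorder2}: when $p\equiv\pm1\bmod 8$ but $p\not\equiv\pm1\bmod 10$, every element of a length-four irredundant generating sequence has order $2$, so $Q_R$ in Eq.~\ref{biggroup} can legitimately carry the relations $x_i^2=1$ and the non-terminating cases become quotients of genuine Coxeter groups such as $\tilde A_3$. Once $\mathrm{A}_5$ is maximal this corollary fails --- Table~1 shows that for $p=11$ order-$3$ elements really do occur in length-four irredundant sequences --- so the presentation must allow a mixture of $x_i^2=1$ and $x_i^3=1$, the case analysis over pairs $(M_3,M_4)$ and over the (much larger) set of length-three irredundant generating sequences of $\mathrm{A}_5$ explodes, and the identification of the resulting quotients with affine or otherwise solvable Coxeter groups is no longer available. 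You would need new intersection lemmas for pairs of maximal $\mathrm{A}_5$'s (which, unlike $\mathrm{S}_4$, have no nontrivial normal subgroups to exploit for the normalizer-uniqueness arguments of Cor.~\ref{allorder2}) and a genuinely new mechanism for excluding $\mathrm{PSL}(2,p)$ as a quotient of the surviving infinite groups. Until that is supplied, the statement remains a conjecture.
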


The fact that $m(G)=4$ only in these four exceptional cases was verified computationally for primes up to 300~\cite{NachmanThesis} and in the exceptional cases, some important properties of the length four irredundant generating sequences have been computed and are shown in Tables 1 and 2.  Determining $m(G)$ for the final case of $p\equiv -1\mod 10$ will be very interesting as it either confirms or denies the surprising finite list of cases for length four irredundant generating sequences in $\mathrm{PSL}(2,p)$.

\begin{table}[h!]
\begin{center}
\begin{tabular}{ |c|c|c| }
\hline
  & $p=7$ & $p=11$ \\
  \hline
   Length $4$ irredundant generating sets & 252&11935 \\
   Conjugacy classes of sets& 2&22\\
   Automorphism classes of sets& 2&14\\
   Possible Orders of Elements &2&2,3\\
   Families of Maximal Subgroups &\{$\mathrm{S}_4,\mathrm{S}_4,\mathrm{S}_4,\mathrm{S}_4\}$&$\{\mathrm{A}_5,\mathrm{A}_5,\mathrm{A}_5,\mathrm{A}_{5}\}$\\
   &&$\{\mathrm{A}_5,\mathrm{A}_5,\mathrm{A}_5,\mathrm{D}_{12}\}$\\
   &&$\{\mathrm{A}_5,\mathrm{A}_5,\mathrm{D}_{12},\mathrm{D}_{12}\}$\\
  \hline
\end{tabular}
\end{center}
  \label{tab:res11}
  \caption{Properties of length four irredundant generating sequences of $\mathrm{PSL}(2,p)$ for $p=7$ and $p=11$.}
  \end{table}
  
\begin{table}[h!]
\begin{center}
\begin{tabular}{ |c|c|c|}
\hline
  &  $p=19$ & $p=31$ \\
  \hline
   Length $4$ irredundant generating sets &7695&14880 \\
   Conjugacy classes of sets& 4&1 \\
   Automorphism classes of sets& 3&1 \\
   Possible Orders of Elements &2&2\\
   Families of Maximal Subgroups &$\{\mathrm{A}_5,\mathrm{A}_5,\mathrm{A}_5,\mathrm{A}_{5}\}$&$\{\mathrm{S}_4,\mathrm{S}_4,\mathrm{A}_5,\mathrm{A}_5\}$\\
   &$\{\mathrm{A}_5,\mathrm{A}_5,\mathrm{A}_5,\mathrm{D}_{20}\}$&\\
   &$\{\mathrm{A}_5,\mathrm{A}_5,\mathrm{D}_{20},\mathrm{D}_{20}\}$&\\
  \hline
\end{tabular}
\end{center}
  \caption{Properties of length four irredundant generating sequences of $\mathrm{PSL}(2,p)$ for $p=19$ and $p=31$.}
    \label{tab:res22}
  \end{table}

\section{Acknowledgements}

This work would not have been possible without countless discussions with R. K. Dennis.  In particular, Lemmas~\ref{funnysubgroup},~\ref{unique}, and Cor.~\ref{cor:p+1} were directly discovered during such discussions and others were byproducts.  The author is grateful for R. K. Dennis' encouragement and academic support. 





\bibliographystyle{plain}	
\bibliographystyle{apsrev4-1}

%







\end{document}